\def\NZQ{\Bbb}               
\def\NN{{\NZQ N}}
\def\QQ{{\NZQ Q}}
\def\ZZ{{\NZQ Z}}
\def\frk{\frak}               
\def\Phi{{\frk n}}
\def\Phi{{\frk N}}
\def\MP{{\mathcal P}}
\def\MG{{\mathcal G}}
\def\MA{{\mathcal A}}
\def\MB{{\mathcal B}}
\def\opn#1#2{\def#1{\operatorname{#2}}} 
\opn\chara{char} \opn\length{\ell} \opn\pd{pd} \opn\rk{rk}
\opn\projdim{proj\,dim} \opn\injdim{inj\,dim} \opn\rank{rank}
\opn\depth{depth} \opn\grade{grade} \opn\height{height}
\opn\embdim{emb\,dim} \opn\codim{codim}
\opn\Tr{Tr} \opn\bigrank{big\,rank}
\opn\superheight{superheight}\opn\lcm{lcm}
\opn\trdeg{tr\,deg}
\opn\reg{reg} \opn\lreg{lreg} \opn\ini{in} \opn\lpd{lpd}
\opn\size{size}\opn\bigsize{bigsize}
\opn\cosize{cosize}\opn\bigcosize{bigcosize}
\opn\sdepth{sdepth}\opn\sreg{sreg}
\opn\link{link}\opn\fdepth{fdepth}\opn\rev{rev}
\opn\div{div} \opn\Div{Div} \opn\cl{cl} \opn\Cl{Cl}
\let\epsilon\varepsilon
\let\phi=\varphi
\let\kappa=\varkappa
\opn\Spec{Spec} \opn\Supp{Supp} \opn\supp{supp} \opn\Sing{Sing}
\opn\Ass{Ass} \opn\Min{Min}\opn\Mon{Mon} \opn\dstab{dstab} \opn\astab{astab}
\opn\Syz{Syz}
\opn\Ann{Ann} \opn\Rad{Rad} \opn\Soc{Soc}
\opn\Im{Im} \opn\Ker{Ker} \opn\Coker{Coker} \opn\Am{Am}
\opn\Hom{Hom} \opn\Tor{Tor} \opn\Ext{Ext} \opn\End{End}
\opn\Aut{Aut} \opn\id{id}
\opn\nat{nat}
\opn\pff{pf}
\opn\Pf{Pf} \opn\GL{GL} \opn\SL{SL} \opn\mod{mod} \opn\ord{ord}
\opn\Gin{Gin} \opn\Hilb{Hilb}\opn\sort{sort}
\opn\initial{init}
\opn\ende{end}
\opn\height{height}
\opn\type{type}
\opn\aff{aff} \opn\con{conv} \opn\relint{relint} \opn\st{st}
\opn\lk{lk} \opn\cn{cn} \opn\core{core} \opn\vol{vol}
\opn\link{link} \opn\star{star}\opn\lex{lex}
\opn\gr{gr}
\def\pot#1#2{#1[\kern-0.28ex[#2]\kern-0.28ex]}
\opn\dirlim{\underrightarrow{\lim}}
\opn\inivlim{\underleftarrow{\lim}}
\let\union=\cup
\let\sect=\cap
\let\tensor=\otimes
\let\iso=\cong
\let\to=\rightarrow
\def\Implies{\ifmmode\Longrightarrow \else
        \unskip${}\Longrightarrow{}$\ignorespaces\fi}
\def\implies{\ifmmode\Rightarrow \else
        \unskip${}\Rightarrow{}$\ignorespaces\fi}
\def\iff{\ifmmode\Longleftrightarrow \else
        \unskip${}\Longleftrightarrow{}$\ignorespaces\fi}
 \theoremstyle{plain}
\newtheorem{Theorem}{Theorem}[section]
 \newtheorem{Lemma}[Theorem]{Lemma}
 \newtheorem{Corollary}[Theorem]{Corollary}
 \newtheorem{Conjecture}[Theorem]{Conjecture}
 \theoremstyle{definition}
 \newtheorem{Example}[Theorem]{Example}
\let\epsilon\varepsilon
\let\kappa=\varkappa
\opn\dis{dis}
\def\pnt{{\raise0.5mm\hbox{\large\bf.}}}
\opn\Lex{Lex}
\begin{document}
\title{Isotonian Algebras}
\author {Mina Bigdeli, J\"urgen Herzog, Takayuki Hibi, Ayesha Asloob Qureshi and  Akihiro Shikama}

\address{Mina Bigdeli, Department  of Mathematics,  Institute for Advanced Studies in Basic Sciences (IASBS),
45195-1159 Zanjan, Iran} \email{m.bigdelie@iasbs.ac.ir}

\address{J\"urgen Herzog, Fachbereich Mathematik, Universit\"at Duisburg-Essen, Fakult\"at f\"ur Mathematik, 45117
Essen, Germany} \email{juergen.herzog@uni-essen.de}

\address{Takayuki Hibi, Department of Pure and Applied Mathematics, Graduate School of Information Science and Technology,
Osaka University, Suita, Osaka 565-0871, Japan}
\email{hibi@math.sci.osaka-u.ac.jp}

\address{Ayesha Asloob Qureshi, Department of Pure and Applied Mathematics, Graduate School of Information Science and Technology,
Osaka University, Suita, Osaka 565-0871, Japan}
\email{ayesqi@gmail.com}
\address{Akihiro Shikama, Department of Pure and Applied Mathematics, Graduate School of Information Science and Technology,
Osaka University, Suita, Osaka 565-0871, Japan}
\email{a-shikama@cr.math.sci.osaka-u.ac.jp}

\begin{abstract}
To a pair $P$ and $Q$ of finite posets we attach the toric ring $K[P,Q]$ whose generators are in bijection to the isotone maps from $P$ to $Q$. This class of algebras, called isotonian, are natural generalizations of the so-called Hibi rings. We determine the Krull dimension of these algebras and for particular classes of posets $P$ and $Q$ we show that $K[P,Q]$ is normal and that their defining ideal admits a quadratic Gr\"obner basis.
\end{abstract}
\thanks{The first author thanks the University of Duisburg-Essen for the hospitality.  She also   wants to thank the University of Osnabr\"uck and IASBS for partial financial support during the preparation of this paper. The fourth author was supported by JSPS Postdoctoral Fellowship for Overseas Researchers FY2014.}
\subjclass[2010]{13P10, 05E40, 13C99, 06A11}
\keywords{Finite posets, Isoton maps, Letterplace ideals, Isotonian algebras}

\maketitle
\section*{Introduction}
Let $K$ be a field, and let $L$ be a finite distributive lattice.  In 1987, the third author \cite{Hi}
introduced the $K$-algebra $K[L]$ which nowadays is called the Hibi ring of the distributive lattice $L$.
The $K$-algebra $K[L]$ is generated over $K$ by the elements $\alpha\in L$ with defining relations
$\alpha\beta=(\alpha\wedge \beta)(\alpha\vee\beta)$ with $\alpha,\beta\in L$.
In that early paper, Hibi also showed that $K[L]$ is a normal Cohen--Macaulay domain.

One remarkable fact is that  $K[L]$ may be viewed as  a toric ring. This can be seen by using Birkhoff's fundamental theorem from 1937 which says that each finite distributive lattice is the ideal lattice of a finite poset $P$. Indeed, the subposet of $L$, induced by the join-irreducible elements of $L$, is the poset $P$ whose ideal lattice $\mathcal{I}(P)$ is the given distributive lattice $L$. Having the poset $P$ of join-irreducible elements of $L$ at our disposal, we can write $K[L]$ as the $K$-algebra generated over $K$  by the monomials $u_I=\prod_{p\in I}x_p\prod_{p\not\in I}y_p\in K[\{x_p,y_p\}_{p\in P}]$ with $I\in \mathcal{I}(P)$. Hibi, in his classical paper also showed that the Krull dimension of $K[L]$ is equal to the cardinality $|P|$ of $P$.

 Birkhoff's  theorem  can also be phrased as follow: let $P$ an $Q$ be finite posets. We denote by $\Hom(P,Q)$ the set of order preserving maps, also called isotone maps. Observe that $\Hom(P,Q)$ is again a finite poset, by setting $\varphi\leq \psi$ if and only if $\varphi(p)\leq \psi(p)$ for all $p\in P$.  let  $L$ be  a distributive lattice with $P$ its subposet of join irreducible elements. Birkhoff's theorem is equivalent to saying that there  is a natural isomorphism of posets $L\iso \Hom(P,[2])$. Here for an integer $n$ we denote by $[n]$  the totally ordered set $\{1<2<\cdots<n\}$.

 The set $\MP$ of finite posets together with the isotone maps forms a category,  first considered in \cite{FGH}. In the same paper the authors introduced the ideal $L(P,Q)$ which  is generated by the monomials
\[
u_{\varphi}=\prod_{p\in P}x_{p,\varphi(p)}\quad \text{with}\quad p\in P.
\]
In the special cases $P=[n]$ or $Q=[n]$, the ideals $L(P,Q)$  first  appeared in the work \cite{EHM} of Ene, Mohammadi and the second author. In the sequel the algebraic and homological properties of the ideals $L(P,Q)$ have been subject of further investigations in the papers \cite{HSQ} and \cite{KKM}.

Here we are interested in the algebras $K[P,Q]$ which are the toric rings generated over $K$ by the monomials $u_\varphi$ with $\varphi\in \Hom(P,Q)$. We call these algebras {\em isotonian} because their  generators correspond bijectively to the isotone maps from $P$ to $Q$. In the special case that $Q=[2]$,  we obtain the classical Hibi rings. Accordingly, one would expect that isotonian algebras share all the nice properties of Hibi rings. Their Krull dimension can be determined. In Theorem~\ref{dimension}  it is shown that $\dim K[P,Q]=|P|(|Q|-s)+rs-r+1$, where $r$ is the number of connected components of $P$ and $s$ is the number of connected components of $Q$. The proof of this theorem also shows that the algebraic  variety whose coordinate ring is $K[P,Q]$ is birationally equivalent to the Segre product of suitable copies of affine  spaces.

Hibi rings are normal and Cohen--Macaulay. Do the same properties hold true for isotonian algebras? In Corollary~\ref{normal} it is shown that this is the case when the Hasse diagram of $P$ is a forest.  But this is also the case when $Q=[n]$, as shown in \cite[Corollary 4.3]{EHM}. Corollary~\ref{normal} is a straightforward  consequence of a more general fact. Indeed, in Theorem~\ref{normaltheorem} the following result is proved: let $P'$ be the poset which is obtained from $P$ by adding an element $p'$ to $P$ which has a unique upper or lower neighbor in $P$. Then $K[P,Q]$ is normal if and only if $K[P',Q]$ is normal. Based on these results and on computational evidence  we are lead to conjecture that  all isotonian algebras are normal Cohen--Macaulay domains.  One way to prove this conjecture in general would be to show that there exists a term order such that the initial ideal of the defining ideal  $J_{P,Q}$ of $K[P,Q]$  is squarefree. By a theorem of Sturmfels \cite[Chapter 8]{St} this would imply that $K[P,Q]$ is normal, and by a theorem of Hochster \cite[Theorem 1]{Ho} this in turn implies Cohen--Macaulayness. In the last  section of this paper we show in Theorem~\ref{Boston} that if $P$ is a chain and $Q$ is a rooted or co-rooted poset, then $J_{P,Q}$ has a quadratic (and hence also a squarefree)  Gr\"obner basis with respect to the reverse lexicographic order induced by a canonical labeling of the variables. In general, $J_{P,Q}$ may contain generators of arbitrarily high degree. This happens  if $Q$ contains as an induced subposet, what we call a poset cycle.  On the other hand, we conjecture that $J_{P,Q}$ is quadratically generated if and only if $Q$ does not contain any induced poset cycle of length greater than 4.

\section{Operations on posets and  the $K$-algebra $K[P,Q]$}

 Let $P$ and $Q$ be finite posets. A map $\varphi\:  P \to Q$ is called {\em isotone} (order preserving), if $\varphi(p)\leq \varphi(p')$ for all $p,p'\in P$ with $p<p'$. The set of all isotone maps from $P$ to $Q$ is denoted by $\Hom(P,Q)$. Obviously, if $P,Q$ and $R$ are finite posets  and  $\varphi\in \Hom(P,Q)$ and $\psi\in \Hom(Q,R)$, then  $\psi\circ \varphi\in \Hom(P,R)$.  We denote by $\mathcal{P}$ the category whose objects are finite posets and whose morphisms are isotone maps. We note that $\Hom(P,Q)$ is again a poset with $\varphi\leq \psi$ for $\varphi, \psi\in \Hom(P,Q)$ if and only if $\varphi(p)\leq \psi(p)$ for all $p\in P$. Thus, $\Hom(P,\_ )\: \MP\to \MP$  is a covariant and $\Hom(\_\, ,Q)\: \MP\to \MP$ a contravariant functor.

 Let $P\in \MP$, and let $p_1,p_2\in P$. One says that $p_2$ {\em covers} $p_1$ if $p_1<p_2$, and there is no $p\in P$ with $p_1<p<p_2$.  We define the  graph $G(P)$ on the vertex set $P$ as follows:  a $2$-element  subset   $\{p_1,p_2\}$ is an edge of $G(P)$ if and only if $p_2$ covers $p_1$ or $p_1$ covers $p_2$. The graph $G(P)$ is the underlying graph of the so-called {\em Hasse diagram} of $P$ which may be viewed as a directed graph whose  edges are  the ordered pairs $(p_1,p_2)$, where $p_2$ covers $p_1$.

We say that $P$ is {\em connected},  if $G(P)$ is a connected graph. Given two posets $P_1$ and $P_2$, the sum $P_1+P_2$ is defined to be the disjoint union of the elements  of $P_1$ and $P_2$ with $p\leq q$ if and only if $p,q\in P_1$ or $p,q\in P_2$ and $p\leq q$ in the corresponding posets $P_1$ or $P_2$. Then it is clear that any $P\in \MP$ can be written as  $P=\sum_{i=1}^rP_i$ where each $P_i$ is a connected subposet of $P$. The subposets $P_i$ of $P$ are called the {\em connected components} of $P$.

\begin{Example}
Let $P$ be the poset displayed in Figure~\ref{Hi}.
\begin{figure}[!htbp]
\begin{center}
\psset{xunit=.8cm,yunit=.8cm,algebraic=true,dimen=middle,dotstyle=o,dotsize=5pt 0,linewidth=0.8pt,arrowsize=3pt 2,arrowinset=0.25}
\begin{pspicture*}(6.8,-.9)(12.3,2.6)
\psline(8.38,1.84)(9.4,-0.24)
\psline(10.48,1.84)(9.4,-0.24)
\rput[tl](9.26,-0.64){\begin{scriptsize}1\end{scriptsize}}
\rput[tl](8.26,2.4){\begin{scriptsize}2\end{scriptsize}}
\rput[tl](10.42,2.38){\begin{scriptsize}3\end{scriptsize}}
\begin{scriptsize}
\psdots[dotstyle=*,linecolor=black](8.38,1.84)
\psdots[dotstyle=*,linecolor=black](9.4,-0.24)
\psdots[dotstyle=*,linecolor=black](10.48,1.84)
\end{scriptsize}
\end{pspicture*}
\caption{Poset $P$}
\label{Hi}
\end{center}
\end{figure}

We identify an isotone map $\varphi\: P \to P$ with $\varphi(p_i)=p_{j_i}$ ($i=1,2,3$) with the sequence $j_1j_2j_3$. With the notation introduced, the elements of $\Hom(P,P)$ are:
\[
111, 112, 121, 113, 131, 122, 123, 132, 133, 222, 333.
\]
The poset $\Hom(P,P)$ is displayed in Figure~\ref{hom}.
\begin{figure}[!htbp]
\begin{center}
\psset{xunit=.8cm,yunit=.8cm,algebraic=true,dimen=middle,dotstyle=o,dotsize=5pt 0,linewidth=0.8pt,arrowsize=3pt 2,arrowinset=0.25}
\begin{pspicture*}(1.5,-5.)(12.5,4.)
\psline(7.032209159550816,-3.2578631247540226)(5.888546620539439,-2.1786322780813188)
\psline(7.032209159550816,-3.2578631247540226)(8.175871698562192,-2.275279816589322)
\psline(7.096640851889485,-1.0832935083239474)(5.888546620539439,-2.1786322780813188)
\psline(7.096640851889485,-1.0832935083239474)(8.175871698562192,-2.275279816589322)
\psline(9.36785800682757,-1.0832935083239474)(8.175871698562192,-2.275279816589322)
\psline(4.889855389290069,-1.1799410468319507)(5.888546620539439,-2.1786322780813188)
\psline(5.260337620237416,0.6402542617354456)(4.889855389290069,-1.1799410468319507)
\psline(5.260337620237416,0.6402542617354456)(7.096640851889485,-1.0832935083239474)
\psline(8.932944083541553,0.4147433385501045)(7.096640851889485,-1.0832935083239474)
\psline(8.932944083541553,0.4147433385501045)(9.36785800682757,-1.0832935083239474)
\psline(6.999993313381482,0.6080384155661112)(7.096640851889485,-1.0832935083239474)
\psline(5.260337620237416,0.6402542617354456)(6.565079390095465,1.9449960315934907)
\psline(6.565079390095465,1.9449960315934907)(8.932944083541553,0.4147433385501045)
\psline(6.999993313381482,0.6080384155661112)(8.240303390900861,2.895363493588857)
\psline(8.240303390900861,2.895363493588857)(6.565079390095465,1.9449960315934907)
\rput[tl](6.758374467111474,-3.560561201870703){\begin{scriptsize}111\end{scriptsize}}
\rput[tl](5.24422969715275,-2.320251124351327){\begin{scriptsize}121\end{scriptsize}}
\rput[tl](8.353058852493534,-2.21927355097325){\begin{scriptsize}112\end{scriptsize}}
\rput[tl](4.103919619633369,-1.0999410468319507){\begin{scriptsize}131\end{scriptsize}}
\rput[tl](9.541692699266915,-0.9005380467312769){\begin{scriptsize}113\end{scriptsize}}
\rput[tl](6.710050697857472,-1.5148549701179657){\begin{scriptsize}122\end{scriptsize}}
\rput[tl](9.056778775980899,0.629591847194389){\begin{scriptsize}123\end{scriptsize}}
\rput[tl](4.46078081326715,0.763621848201128){\begin{scriptsize}132\end{scriptsize}}
\rput[tl](7.161072544228156,0.8225693967767){\begin{scriptsize}222\end{scriptsize}}
\rput[tl](5.6024960819307625,2.0238594162708286){\begin{scriptsize}133\end{scriptsize}}
\rput[tl](8.401922237170872,3.1047664936895307){\begin{scriptsize}333\end{scriptsize}}
\begin{scriptsize}
\psdots[dotstyle=*,linecolor=black](7.032209159550816,-3.2578631247540226)
\psdots[dotstyle=*,linecolor=black](5.888546620539439,-2.1786322780813188)
\psdots[dotstyle=*,linecolor=black](8.175871698562192,-2.275279816589322)
\psdots[dotstyle=*,linecolor=black](7.096640851889485,-1.0832935083239474)
\psdots[dotstyle=*,linecolor=black](9.36785800682757,-1.0832935083239474)
\psdots[dotstyle=*,linecolor=black](4.889855389290069,-1.1799410468319507)
\psdots[dotstyle=*,linecolor=black](5.260337620237416,0.6402542617354456)
\psdots[dotstyle=*,linecolor=black](8.932944083541553,0.4147433385501045)
\psdots[dotstyle=*,linecolor=black](6.999993313381482,0.6080384155661112)
\psdots[dotstyle=*,linecolor=black](6.565079390095465,1.9449960315934907)
\psdots[dotstyle=*,linecolor=black](8.240303390900861,2.895363493588857)
\end{scriptsize}
\end{pspicture*}
\caption{$\Hom(P,P)$}\label{hom}
\end{center}
\end{figure}

\end{Example}

The {\em  product} $P_1\times P_2$  of $P_1$ and $P_2$ is the poset whose elements are the pairs $(p_1,p_2)$ with $p_1\in P_1$ and $p_2\in P_2$. The order  relations in $P_1\times P_2$ are defined componentwise. For $P_1\times P_2\times \cdots \times P_s$ we also write $\prod_{i=1}^sP_i$.

\medskip
In the next lemma we present two obvious (but useful) rules of the $\Hom$-posets.

\begin{Lemma}
\label{sum}
Let $P,P_1,P_2,\ldots,P_r$ and $Q,Q_1,Q_2,\ldots,Q_s$ be finite posets, and assume that $P$ is connected. Then
\begin{itemize}
\item[{\em (a)}] $\Hom(\sum_{i=1}^rP_i,Q)\iso \prod_{i=1}^r\Hom(P_i,Q)$;
\item[{\em (b)}] $\Hom(P, \sum_{i=1}^sQ_i)\iso \sum_{i=1}^s\Hom(P,Q_i)$.
\end{itemize}
\end{Lemma}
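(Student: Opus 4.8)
The plan is to prove both isomorphisms by constructing explicit order-preserving bijections at the level of isotone maps. Both statements are essentially universal-property-type facts about the category $\MP$: part (a) expresses that the coproduct $\sum P_i$ maps to $Q$ componentwise, while part (b) expresses that a connected poset maps into a disjoint union by landing entirely in one component. I would treat them separately.

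For part (a), I would define a map $\Phi\:\Hom(\sum_{i=1}^r P_i,Q)\to \prod_{i=1}^r\Hom(P_i,Q)$ by $\Phi(\varphi)=(\varphi|_{P_1},\ldots,\varphi|_{P_r})$, sending an isotone map to the tuple of its restrictions. First I would check each restriction $\varphi|_{P_i}$ is isotone, which is immediate since the order on $P_i$ is inherited from $\sum_i P_i$. The inverse sends a tuple $(\psi_1,\ldots,\psi_r)$ to the map $\psi$ defined by $\psi(p)=\psi_i(p)$ when $p\in P_i$; here I must verify $\psi$ is isotone, and this is where the definition of the sum is used: if $p<q$ in $\sum_i P_i$, then by construction $p,q$ lie in the same component $P_i$, so $\psi(p)=\psi_i(p)\leq\psi_i(q)=\psi(q)$. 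Finally I would observe that $\Phi$ and its inverse both respect the partial order, since $\varphi\leq\varphi'$ holds iff $\varphi(p)\leq\varphi'(p)$ for all $p$, which happens iff $\varphi|_{P_i}\leq\varphi'|_{P_i}$ for every $i$, matching the componentwise order on the product.

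For part (b), the key observation is that since $P$ is connected, the image of any $\varphi\in\Hom(P,\sum_{i=1}^s Q_i)$ must lie in a single component $Q_i$. I would prove this by showing that the set $\{p\in P : \varphi(p)\in Q_i\}$ is a union of connected components of the graph $G(P)$ for each $i$: if $\{p,p'\}$ is an edge of $G(P)$, then $p$ and $p'$ are comparable, so $\varphi(p)$ and $\varphi(p')$ are comparable in $\sum_i Q_i$, forcing them into the same $Q_i$ by the definition of the sum. Since $P$ is connected, the whole of $P$ therefore maps into one $Q_i$. This yields a well-defined map $\Hom(P,\sum_i Q_i)\to \sum_i\Hom(P,Q_i)$ recording the index $i$ together with the corestricted map $\varphi\:P\to Q_i$; the inverse is given by composition with the inclusion $Q_i\hookrightarrow\sum_j Q_j$, which is an isotone embedding. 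To finish I would check this bijection is an isomorphism of posets by unwinding the definition of the sum on the target side: two maps are comparable only when they land in the same component, and there the comparison reduces to comparison in $\Hom(P,Q_i)$.

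I do not expect either part to present a serious obstacle, since both are ``obvious but useful'' bookkeeping facts, as the text itself signals. The only point requiring genuine care is the connectivity argument in part (b): one must argue that comparability of images under an isotone map forces them into a common component, and this uses precisely that in $\sum_i Q_i$ no element of $Q_i$ is comparable to any element of $Q_j$ for $i\neq j$. The hypothesis that $P$ is connected is essential here and is exactly what propagates the single-component conclusion across all of $P$ via the edges of $G(P)$.
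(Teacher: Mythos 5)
Your proof is correct. The paper itself offers no proof of this lemma --- it is introduced with the words ``two obvious (but useful) rules'' and stated without argument --- and your write-up is precisely the natural verification the authors are implicitly relying on: restriction/gluing for part (a), and for part (b) the observation that adjacent vertices of $G(P)$ have comparable images, so each $\varphi^{-1}(Q_i)$ is a union of connected components of $G(P)$, forcing the image of the connected poset $P$ into a single summand $Q_i$.
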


\medskip
We now introduce the {\em isotonian algebra} $K[P,Q]$ attached to a pair $P$, $Q$ of finite posets. For this purpose we fix a field $K$, and consider the polynomial ring  over $K$ in the variables $x_{p,q}$  with $p\in P$ and $q\in Q$. Then $K[P,Q]$ is the toric ring generated over $K$ by the monomials
\[
u_\varphi =\prod_{p\in P}x_{p,\varphi(p)}.
\]

Let $R_1=K[f_1,\ldots,f_r]\subset K[x_1,\ldots,x_n]$ and $R_2=K[g_1,\ldots,g_s]\subset K[y_1,\ldots,y_m]$ be two standard graded $K$-algebras. Then
\[
R_1\tensor R_2= K[f_1,\ldots,f_r, g_1,\ldots,g_s]
\]
is the tensor product of $R_1$ and $R_2$ over $K$, while
\[
R_1*R_2=K[\{f_ig_j\:\; i=1,\ldots,r,j=1,\ldots,s\}]
\]
is the Segre product of $R_1$ and $R_2$.

The following isomorphisms are immediate consequences of Lemma~\ref{sum} and the definition of isotonian algebras.

\begin{Lemma}\label{segre}
Let $P,P_1,P_2,\ldots,P_r$ and $Q,Q_1,Q_2,\ldots,Q_s$ be finite posets and assume that $P$ is connected. Then
\begin{itemize}
\item[{\em (a)}] $K[\sum_{i=1}^rP_i,Q]\iso K[P_1,Q]*K[P_2,Q]*\cdots *K[P_r,Q]$;
\item[{\em (b)}] $K[P, \sum_{i=1}^sQ_i]\iso K[P,Q_1]\tensor K[P,Q_2]\tensor \cdots \tensor K[P,Q_s]$.
\end{itemize}
\end{Lemma}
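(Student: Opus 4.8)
The plan is to deduce both isomorphisms directly from the identifications of Hom-posets in Lemma~\ref{sum}, by checking that under those bijections the toric generators $u_\varphi$ translate precisely into the generators of a Segre product in case (a) and of a tensor product in case (b). The structural observation underlying everything is that the ambient polynomial ring splits into disjoint blocks of variables. Writing $P=\sum_{i=1}^r P_i$, the blocks $\{x_{p,q}\: p\in P_i,\ q\in Q\}$ are pairwise disjoint because the $P_i$ are disjoint as sets; writing $Q=\sum_{i=1}^s Q_i$, the blocks $\{x_{p,q}\: p\in P,\ q\in Q_i\}$ are pairwise disjoint for the same reason. Hence in each case the several algebras $K[P_i,Q]$ (resp. $K[P,Q_i]$) sit as subalgebras of one common polynomial ring in disjoint sets of variables, so by the very definitions of $*$ and $\tensor$ recalled above their Segre and tensor products are realized concretely as subrings of that ring; it then suffices to compare generating monomials.

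For (a) I would start from the bijection $\Hom(\sum_{i=1}^r P_i,Q)\iso \prod_{i=1}^r\Hom(P_i,Q)$ of Lemma~\ref{sum}(a), which sends $\varphi$ to the tuple of restrictions $(\varphi_1,\ldots,\varphi_r)$ with $\varphi_i=\varphi|_{P_i}\in\Hom(P_i,Q)$. Grouping the product defining $u_\varphi$ according to which component each $p$ lies in gives $u_\varphi=\prod_{i=1}^r u_{\varphi_i}$, where each $u_{\varphi_i}$ is a generator of $K[P_i,Q]$. As $\varphi$ ranges over $\Hom(\sum_i P_i,Q)$ the tuple $(\varphi_1,\ldots,\varphi_r)$ ranges over all of $\prod_i\Hom(P_i,Q)$, so the monomials $u_\varphi$ are exactly the products of one generator chosen from each $K[P_i,Q]$. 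By the definition of the Segre product this is precisely the generating set of $K[P_1,Q]*\cdots*K[P_r,Q]$, and since both algebras are generated by the same monomials inside the common polynomial ring, they coincide.

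For (b) the key input is that $P$ is connected: by Lemma~\ref{sum}(b) every $\varphi\in\Hom(P,\sum_i Q_i)$ has image contained in a single component $Q_i$, so $\Hom(P,\sum_i Q_i)$ is the disjoint union of the posets $\Hom(P,Q_i)$. Consequently each generator $u_\varphi$ involves only variables from a single block $\{x_{p,q}\: q\in Q_i\}$ and is thus a generator of the corresponding $K[P,Q_i]$. The full generating set of $K[P,\sum_i Q_i]$ is then the union over $i$ of the generating sets of the $K[P,Q_i]$, which is exactly the generating set of $K[P,Q_1]\tensor\cdots\tensor K[P,Q_s]$ inside the common polynomial ring; hence the two algebras agree.

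I expect no serious obstacle, as both statements reduce to bookkeeping once Lemma~\ref{sum} is available. The only point requiring genuine care is to keep straight the two different combinatorial shapes, namely \emph{products} of generators (the Segre product) in (a) versus \emph{unions} of generators (the tensor product) in (b), together with the fact that in (a) no connectedness hypothesis is used, whereas in (b) the connectedness of $P$ is exactly what forces each isotone map into a single target component and thereby produces the tensor rather than the Segre structure.
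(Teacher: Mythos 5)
Your proof is correct and follows exactly the route the paper intends: the paper offers no written proof, stating only that the isomorphisms are ``immediate consequences of Lemma~\ref{sum} and the definition of isotonian algebras,'' and your argument is precisely the careful elaboration of that remark --- splitting the variables into disjoint blocks, factoring $u_\varphi=\prod_i u_{\varphi_i}$ via Lemma~\ref{sum}(a) to recover the Segre generators, and using connectedness of $P$ via Lemma~\ref{sum}(b) to see that each $u_\varphi$ lies in a single block, recovering the tensor generators. Nothing is missing; your closing remark correctly identifies where the connectedness hypothesis enters (only in (b)).
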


As a first consequence we obtain

\begin{Corollary}
\label{normalconnected}
Let $P$ be a finite poset with connected components $P_1,P_2,\ldots,P_r$ and  $Q$ a finite poset with connected components $Q_1,Q_2,\ldots,Q_s$. Then $K[P,Q]$ is normal if all $K[P_i,Q_j]$ are normal. In particular, if this is the case, then $K[P,Q]$ is also Cohen--Macaulay.
\end{Corollary}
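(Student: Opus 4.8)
The plan is to use both parts of Lemma~\ref{segre} to realise $K[P,Q]$ as an iterated tensor and Segre product of the blocks $K[P_i,Q_j]$, and then to observe that each of these two operations preserves normality of toric rings. Write $P=\sum_{i=1}^rP_i$ and $Q=\sum_{j=1}^sQ_j$. By Lemma~\ref{segre}(a),
\[
K[P,Q]\iso K[P_1,Q]*K[P_2,Q]*\cdots*K[P_r,Q],
\]
and since each $P_i$ is connected, Lemma~\ref{segre}(b) applied to each factor gives
\[
K[P_i,Q]\iso K[P_i,Q_1]\tensor K[P_i,Q_2]\tensor\cdots\tensor K[P_i,Q_s].
\]
Hence $K[P,Q]$ is obtained from the rings $K[P_i,Q_j]$ by first forming tensor products over $K$ and then a single Segre product. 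Because all the rings involved are toric, each is of the form $K[S]$ for an affine semigroup $S$, and normality of $K[S]$ is equivalent to $S$ being saturated, i.e.\ $S=\ZZ S\cap\RR_{\geq 0}S$. So it suffices to show that the two operations preserve saturation.

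For the tensor product, the variables $x_{p,q}$ with $q$ ranging over distinct components $Q_j$ are disjoint, so $K[P_i,Q_1]\tensor\cdots\tensor K[P_i,Q_s]$ is the semigroup ring of the direct sum $S_{i1}\oplus\cdots\oplus S_{is}$ of the corresponding semigroups; since $\ZZ(\bigoplus_j S_{ij})=\bigoplus_j\ZZ S_{ij}$ and $\RR_{\geq 0}(\bigoplus_j S_{ij})=\bigoplus_j\RR_{\geq 0}S_{ij}$, a direct sum of saturated semigroups is again saturated. For the Segre product, the underlying semigroup $U$ is the set of those elements of the direct sum $T_1\oplus\cdots\oplus T_r$ (where $T_i$ is the semigroup of $K[P_i,Q]$) whose degree components agree, so that $U=(T_1\oplus\cdots\oplus T_r)\cap H$, where $H$ is the subgroup of the ambient lattice cut out by equating the grading forms. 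The key point is the elementary fact that if $A\subseteq\ZZ^n$ is saturated and $H\le\ZZ^n$ is a subgroup, then $A\cap H$ is again saturated in $H$: any $w\in\ZZ(A\cap H)\cap\RR_{\geq 0}(A\cap H)$ lies in $H$ and in $\ZZ A\cap\RR_{\geq 0}A=A$, hence in $A\cap H$. Applying this with $A=T_1\oplus\cdots\oplus T_r$, which is saturated by the previous step, shows that the Segre product is saturated, hence normal.

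Combining these observations, if every $K[P_i,Q_j]$ is normal then so is $K[P,Q]$. For the final assertion, $K[P,Q]$ is now a normal affine semigroup ring, and by Hochster's theorem \cite[Theorem 1]{Ho} every normal affine semigroup ring is Cohen--Macaulay; thus $K[P,Q]$ is Cohen--Macaulay.

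I expect the main obstacle to be isolating and verifying the closure of normality under the Segre product. Once this is reduced to the statement that intersecting a saturated semigroup with a subgroup preserves saturation, the argument is short; the tensor-product case and the passage to Cohen--Macaulayness via Hochster's theorem are then routine.
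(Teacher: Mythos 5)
Your proof is correct and takes essentially the same route as the paper: decompose $K[P,Q]$ via Lemma~\ref{segre} into a Segre product of tensor products of the blocks $K[P_i,Q_j]$, use that these two operations preserve normality, and conclude Cohen--Macaulayness from Hochster's theorem \cite[Theorem 1]{Ho}. The only difference is that the paper simply cites the preservation of normality under tensor and Segre products of standard graded toric rings as a well-known fact, whereas you supply a correct self-contained verification of it (direct sums of saturated affine semigroups are saturated, and intersecting a saturated semigroup with a subgroup of the lattice preserves saturation).
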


\begin{proof}
It is a well-known fact that the Segre product or the tensor product of normal standard graded toric rings is normal.
Thus the assertion follows. The Cohen--Macaulayness  of $K[P,Q]$ is then the consequence of Hochster's theorem \cite[Theorem 1]{Ho}.
\end{proof}

\medskip
Let $P\in \MP$. The {\em dual poset} $P^\vee$ of $P$ is the poset whose underlying set coincides with that of $P$ and whose order relations are reversed. In other words, $p<p'$ in $P$ is equivalent to $p'<p$ in $P^\vee$.

Since $\varphi \: P\to Q$ is isotone if and only if $\varphi \: P^\vee\to Q^\vee$ is isotone, we have

\begin{Lemma}
\label{obvious}
Let $P$ and $Q$ be finite posets. Then the $K$-algebras $K[P,Q]$ and $ K[P^\vee,Q^\vee]$ are isomorphic.
\end{Lemma}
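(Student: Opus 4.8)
The plan is to observe that the two algebras are generated by literally the same set of monomials, so that the desired isomorphism is nothing more than the identity map. First I would unwind the definitions. Since the underlying set of $P^\vee$ equals that of $P$, and likewise the underlying set of $Q^\vee$ equals that of $Q$, both $K[P,Q]$ and $K[P^\vee,Q^\vee]$ sit inside the very same polynomial ring $K[x_{p,q}\: p\in P,\ q\in Q]$. Moreover, for any map $\varphi$ the monomial $u_\varphi=\prod_{p}x_{p,\varphi(p)}$ is computed by one and the same formula, since this formula makes no reference to the order on $P$ or $Q$, only to their underlying sets.

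The key step is the equality of the two generating sets. Using the observation recorded just before the statement --- that a map $\varphi$ is isotone as a map $P\to Q$ exactly when it is isotone as a map $P^\vee\to Q^\vee$ --- I obtain $\Hom(P,Q)=\Hom(P^\vee,Q^\vee)$ as sets of maps, and not merely as isomorphic posets: the two collections are literally the same subset of all functions from the underlying set of $P$ to that of $Q$. Consequently the monomial families $\{u_\varphi\: \varphi\in\Hom(P,Q)\}$ and $\{u_\varphi\: \varphi\in\Hom(P^\vee,Q^\vee)\}$ agree, and since a toric $K$-algebra depends only on its set of monomial generators, $K[P,Q]$ and $K[P^\vee,Q^\vee]$ coincide as subalgebras of the polynomial ring. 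In particular they are isomorphic, which is the assertion.

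I do not expect any genuine obstacle here. The entire content is the biconditional about isotone maps, which is immediate from reversing the defining inequality ($p<p'$ in $P$ being equivalent to $p'<p$ in $P^\vee$), and no toric relation or cancellation has to be checked. The only point worth articulating carefully is that one gets equality of the two $\Hom$-sets \emph{as sets}, rather than just an order isomorphism of posets, since it is precisely this stronger statement that forces the monomial generators to agree verbatim and thereby collapses the proof to an identification.
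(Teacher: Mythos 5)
Your proposal is correct and matches the paper's argument exactly: the paper proves this lemma with the single observation that $\varphi\: P\to Q$ is isotone if and only if $\varphi\: P^\vee\to Q^\vee$ is isotone, which is precisely your key step, and you merely spell out the (correct) consequence that the two generating sets, and hence the two subalgebras, literally coincide.
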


\section{The dimension of $K[P,Q]$}

In this section we compute  the dimension of the algebra $K[P,Q]$. The result is given in

\begin{Theorem}
\label{dimension}
Let $P$ and $Q$ be finite posets and let $r$ be the number of connected components of $P$ and $s$ be the number of connected components of $Q$. Then $\dim K[P,Q]=|P|(|Q|-s)+rs-r+1$.
\end{Theorem}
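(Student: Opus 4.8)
The dimension of a toric ring equals the dimension of the affine variety parametrised by the generating monomials, which is one more than the dimension of the projective toric variety, i.e.\ it equals the rank of the subgroup of $\ZZ^{P\times Q}$ generated by the exponent vectors of the $u_\varphi$ (equivalently, the dimension of the real vector space spanned by these vectors). So the first step is to reduce to a rank computation: I would set
\[
\dim K[P,Q]=\rank_{\ZZ}\langle \ab_\varphi : \varphi\in\Hom(P,Q)\rangle,
\]
where $\ab_\varphi\in\ZZ^{P\times Q}$ is the $0/1$ vector recording the support of $u_\varphi$. Since each $u_\varphi$ has the same total degree $|P|$ (exactly one variable $x_{p,\varphi(p)}$ per $p\in P$), I equivalently compute $1+\dim$ of the projective embedding.

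**Reduction to the connected case.** Using Lemma~\ref{segre}, I would first handle $P$ and $Q$ connected, then assemble the general case. Recall that for standard graded algebras $\dim(R_1\tensor R_2)=\dim R_1+\dim R_2$ and $\dim(R_1*R_2)=\dim R_1+\dim R_2-1$ (the Segre product loses one dimension). By Lemma~\ref{segre}(b), $K[P,Q]\iso K[P,Q_1]\tensor\cdots\tensor K[P,Q_s]$ when $P$ is connected, so its dimension is $\sum_j\dim K[P,Q_j]$. By Lemma~\ref{segre}(a), $K[\sum_iP_i,Q]$ is an $r$-fold Segre product of the $K[P_i,Q]$, so its dimension is $\bigl(\sum_i\dim K[P_i,Q]\bigr)-(r-1)$. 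Chaining these, if I write $d(P,Q)$ for the dimension in the connected–connected case, then the general formula becomes
\[
\dim K[P,Q]=\Bigl(\sum_{i=1}^r\sum_{j=1}^s d(P_i,Q_j)\Bigr)-(r-1),
\]
and the claimed formula $|P|(|Q|-s)+rs-r+1$ should drop out once I establish that $d(P,Q)=|P|(|Q|-1)+1$ for connected $P,Q$. Indeed, summing $|P_i|(|Q_j|-1)+1$ over all $i,j$ gives $|P|(|Q|-s)+rs$, and subtracting $r-1$ yields the formula; so the whole theorem hinges on the single connected case.

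**The connected case — the main obstacle.** The heart of the proof is showing $d(P,Q)=|P|(|Q|-1)+1$ when both are connected. The ``$\le$'' bound is easy: I expect a single linear relation among the exponent vectors coming from the fact that $\sum_{q}(\text{multiplicity of }x_{p,q})=1$ for each fixed $p$, which cuts the ambient dimension $|P|\,|Q|$ down by $|P|$, giving the upper bound $|P|(|Q|-1)$; the extra $+1$ reflects passing from projective to affine (the grading-by-total-degree relation is already one of these). The hard part is the matching lower bound: exhibiting enough isotone maps whose exponent vectors are linearly independent. Here I would exploit connectedness of both $P$ and $Q$ by a spanning-tree/path argument. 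Concretely, fixing a constant map as a base point, I would show that for each $p\in P$ and each pair of adjacent values $q<q'$ in the Hasse diagram of $Q$, one can build an isotone map differing from a reference map only by moving $p$ from $q$ to $q'$ (propagating the change along edges of $G(P)$ when monotonicity forces it), so that the differences of exponent vectors span a lattice of the full rank $|P|(|Q|-1)$. The birational statement mentioned in the introduction — that the variety is birationally a Segre product of affine spaces — suggests the cleanest route: produce an explicit toric (monomial) parametrisation realising $K[P,Q]$ as a subalgebra of a polynomial ring in $|P|(|Q|-1)+1$ algebraically independent monomials, and show the parametrisation is generically injective so that the dimension is exactly the number of parameters. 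I expect the combinatorial bookkeeping in this spanning-tree argument, especially verifying that the chosen maps remain isotone after each elementary modification, to be the genuinely delicate step.
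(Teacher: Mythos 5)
Your overall strategy is sound, and your reduction to the case where both $P$ and $Q$ are connected — Lemma~\ref{segre} combined with $\dim (R_1\otimes R_2)=\dim R_1+\dim R_2$ and $\dim (R_1 * R_2)=\dim R_1+\dim R_2-1$, followed by the arithmetic check — is exactly the paper's reduction. For the connected case, however, you take a genuinely different route. The paper works with the quotient field $L$ of $K[P,Q]$: it shows $L$ equals the field $L'$ generated by the constant-map monomials $u_q=\prod_{p\in P}x_{p,q}$ and the ratios $x_{p,q}/x_{p,q'}$ (the inclusion $L\subset L'$ being a rewriting induction that uses connectedness of $Q$), then prunes the ratios to those indexed by $P\setminus\{p_0\}$ and a spanning tree of $G(Q)$, and finally proves exact linear independence of the exponent vectors of this pruned set (via the incidence matrix of a tree) to read off the transcendence degree. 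You instead compute the rank of the group generated by the exponent vectors of the $u_\varphi$ directly: your upper bound $|P|(|Q|-1)+1$ comes for free from the $|P|$ affine constraints $\sum_{q}a_{p,q}=1$, and your lower bound needs only a spanning set, namely the elementary differences $e_{p,q}-e_{p,q'}$ (over $p\in P$ and covering pairs $q<q'$ in $Q$, whose spans for distinct $p$ have disjoint support) together with a single exponent vector, which has row sums $1$ rather than $0$ and so is not in their span. This trades the paper's rewriting induction and exact independence count for soft linear algebra, and is arguably shorter.

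The one place your sketch is genuinely shaky is the step you yourself flag as delicate: realizing $e_{p,q}-e_{p,q'}$ inside the exponent lattice. As described it does not quite work — modifying a constant map by moving a single $p$ from $q$ to $q'$ is isotone only when $p$ is extremal in $P$, while ``propagating the change along $G(P)$'' destroys the property that the two maps differ only at $p$, so the resulting difference of exponent vectors is no longer elementary. The paper's device closes this gap cleanly and you should adopt it: for a poset ideal $I$ with $p\notin I$ such that $I\cup\{p\}$ is again an ideal (for instance $I=\{p'\in P: p'<p\}$), the two-valued maps $\phi_I^{(q,q')}$ and $\phi_{I\cup\{p\}}^{(q,q')}$, sending the ideal to $q$ and its complement to $q'$, are both isotone and differ exactly at $p$, so $e_{p,q}-e_{p,q'}$ is indeed a difference of two exponent vectors. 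With this substitution (and the standard fact that $\dim K[H]$ equals the rank of $\ZZ H$ for an affine semigroup $H$), your argument is complete.
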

 \begin{proof}
 By using Lemma~\ref{segre} and the fact that for any two standard graded $K$-algebras $R$ and $S$,  $\dim R \tensor S = \dim R + \dim S$ and $\dim R * S = \dim R + \dim S-1$ (see \cite[Theorem 4.2.3]{GW}), the desired conclusion follows once we have shown that $\dim K[P,Q]=|P|(|Q|-1)+1$ in the case that $P$ and $Q$ are connected.
Let $L$ be the quotient field of $K[P,Q]$. Since $K[P,Q]$ is an affine domain,  the $\dim K[P,Q] $ is the transcendence  degree of  $L/K$.

Let $L'$ be the field generated by the elements
\begin{enumerate}
\item[(i)] $u_q = \prod_{p\in P} x_{p,q}$ for  $q \in Q$,
\item[(ii)] $x_{p,q}/ x_{p,q'}$ for $p\in P$, $q ,q' \in Q$ and $q < q'$.
\end{enumerate}
 We will show that $L' =L$. It is obvious that elements $u_q$ belong to $L$. 
Let $I$ be any poset ideal of $P$ and let $q,q'\in Q$ with $q<q'$. Then $\phi_{I}^{(q,q')}: P \rightarrow Q$ defined by
\begin{equation*}
\phi_{I}^{(q,q')}(p)= \left\{ \begin{array}{ll}
         q, &  \text{if  $p \in I$}, \\
         q', &\text{otherwise},
                  \end{array} \right.
\end{equation*}
is an isotone map with image $\{q, q'\}$. Given any $p_0 \in P$, one can find two poset ideals $I,J$ of $P$ such that $J = I \cup \{p_0\}$ and $p_0 \notin I$. Then $u_{\phi_I^{(q,q')}}= \prod _{p \in I} x_{p,q}\prod_{p \notin I} x_{p,q'} $ and $u_{\phi_J^{(q,q')}}= (\prod _{p \in I}  x_{p,q})x_{p_0,q}( \prod_{p \notin I} x_{p,q'}) x_{p_0,q'}^{-1}$. It follows that $u_{\phi_J^{(q,q')}}/ u_{\phi_I^{(q,q')}} =  x_{p_0,q}/ x_{p_0,q'}$. It shows that all elements in (ii) belong to $L$. Therefore, $L' \subset L$.

In order to prove the converse inclusion, let $P=\{p_1, \ldots,p_n\}$ and let $q_1, \ldots, q_n$ be arbitrary elements in $Q$. We will show by induction on $r$ that  the monomial $x_{p_1,q_1} \cdots x_{p_r,q_1} x_{p_{r+1}, q_{r+1}} \cdots x_{p_n, q_n} $  can be obtained as a product  of  $x_{p_1,q_1} \cdots x_{p_n,q_n}$ and suitable elements in (ii) and their inverses. This will imply that any monomial generator of $K[P,Q]$ is contained in $L'$, because the element $x_{p_1,q_1}  x_{p_2, q_2} \cdots x_{p_{n}, q_{n}} $ can then be written as a product of $x_{p_1,q_1} \cdots  x_{p_n, q_1} $ and elements of type $\rm{(ii)}$ and their inverses. As a consequence  this  will imply that $L\subset L'$.

For $r=1$, the statement is trivial. Since $Q$ is connected, there exists a sequence of elements $\tilde{q}_1, \ldots, \tilde{q}_t $ in $Q$ with $\tilde{q}_1 = q_1$ and $\tilde {q}_t = q_{r+1}$ and $\tilde{q}_i$ and $\tilde{q}_{i+1}$ are comparable for all $i$ in $Q$ . Then $x_{p_{r+1}, q_1}/ x_{p_{r+1}, q_{r+1}} =  \prod_{i=1}^{t}( x_{p_{r+1}, \tilde{q}_i }/ x_{p_{r+1}, \tilde{q}_{i+1}})$, where each $x_{p_{r+1}, \tilde{q}_i }/ x_{p_{r+1}, \tilde{q}_{i+1}}$ or its inverse is a monomial of type (ii).
Then the monomial $x_{p_1,q_1} \cdots x_{p_r, q_1} x_{p_{r+1}, q_{1}} x_{p_{r+2}, q_{r+2}}\cdots x_{p_n, q_n} $ can be obtained as a  product of the monomial  $ x_{p_1,q_1} \cdots x_{p_r, q_1} x_{p_{r+1}, q_{r+1}} \cdots x_{p_n, q_n}$ and the monomial $x_{p_{r+1}, q_1}/ x_{p_{r+1}, q_{r+1}}$.

Let $T$ be a spanning tree (i.e.\ a maximal tree)  of $G(Q)$ and choose an element $p_0 \in P$. Next we  will show that $L$ is generated by the elements
\begin{enumerate}
\item[ (i)] $u_q = \prod_{p\in P} x_{p,q}$, for  $q \in Q$,
\item[(ii$'$)] $x_{p,q}/ x_{p,q'}$ for $p\in P \setminus \{p_0\}$ and $\{q, q'\} \in E( T)$ with $q<q'$,
\end{enumerate}
where $E(T)$ is the edge set of $T$. For any $q < q' $ in $Q$, we can obtain a sequence $\tilde{q_1}, \ldots, \tilde{q_t} $ in $T$ such that $\tilde{q_1} = q$ and $\tilde{q_t} = q'$ and $\tilde{q_i}$ and $\tilde{q}_{i+1}$ are neighbors for all $i = 1, \ldots, t$. Consequently, we obtain $x_{p,q}/x_{p,q'} = \prod_{i=1}^{t-1} x_{p,\tilde{q}_{i}}/x_{p,\tilde{q}_{i+1}}$. Also note that $x_{p_0, q}/ x_{p_0, q'} = (u_{q}/u_{q'}) (\prod_{p \in P\setminus \{p_0\}} (x_{p,q'}/x_{p,q}))$  for $q < q'$, and hence $x_{p_0, q}/ x_{p_0, q'}$ is a product of elements of type (i) and (ii$'$) and their inverses.   This shows that all monomials of type (ii) can be obtained as product of monomials of type (i) and type (ii$'$) and their inverses.

Let $\MA$ be  the set of  exponent vectors (in  $\QQ^{P\times Q}$) of the monomials  of type (i) and $\MB$ be the set of  exponent vectors  of the monomials  of type (ii$'$). We will show the set  of vectors $\MA\union\MB$ is  linearly independent. Then \cite[Proposition 7.1.17]{Vi} implies that the Krull dimension of $K[P, Q]$ is equal to the cardinality of the set $\MA\union \MB$.

 Note that  vectors in $\MA$  are linearly independent because their support is pairwise disjoint. Also, the vectors in $\MB$  are linearly independent. To see this,  we let $\MB_p \subset \MB$ be the set of exponent vectors of monomials  $x_{p,q}/ x_{p,q'}$ in (ii$'$) with $p$ fixed. Then $\MB$ is the disjoint union of the sets $\MB_p$ with $p\in P\setminus \{p_0\}$. Moreover, for distinct  $p,p'\in P\setminus\{p_0\}$  the vectors in $\MB_p$ and $\MB_{p'}$ have disjoint support. Thus it suffices to show that for a fixed $p\in P\setminus\{p_0\}$ the vectors in $\MB_p$ are linearly independent. Now we fix such $p\in P\setminus\{p_0\}$. Then the matrix formed by the vectors of  $\MB_p$  is the incidence matrix of the tree $T$ which is known to be of maximal rank, see for example \cite[Lemma 8.3.2]{Vi}. Thus the vectors of $\MB_p$ are linearly independent, as desired.

 Finally, to show that $\MA\union \MB$ is a set of linearly independent vectors it suffices to show $V\sect W=\{0\}$, where $V$ is $\QQ$-vector space spanned by   $\MA$ and $W$ is $\QQ$-vector space spanned by   $\MB$.  Thus we have to show that if $v\in V$ and $w\in W$ with $v=w$, then $v=w=0$. This is equivalent to say that if $u$ is a product of elements of (i) and its inverses and $u'$ is a product of elements of (ii$'$) and its inverses, and if $u=u'$, then $u=u'=1$.  This is indeed the case, because  if $u\neq 1$, then $u$ contains factors of the form $x_{p_0, q}$, while $u'$ does not.

Now we determine the cardinality of $\MA\union \MB$ (which coincides with $\dim K[P,Q]$), and  observe that  $|\MA|=|Q|$  and $|\MB|=(|P|-1)(|Q|-1)$, so that $|\MA\union \MB|=(|P|-1)(|Q|-1) + |Q|=|P|(|Q|-1)+1$.
 \end{proof}

\begin{Corollary}
\label{birational}
Let $P$ and $Q$ be finite connected posets with $|P|=n$ and $|Q|=m$, and let $X_{P,Q}$ be the irreducible variety given by the defining ideal  $J_{P,Q }$ of $K[P,Q]$. Then $X_{P,Q}$ is birationally equivalent to the variety $Y_{n,m}$ whose coordinate ring is the $n$-fold Segre product of the $m$-dimensional polynomial ring over $K$.
\end{Corollary}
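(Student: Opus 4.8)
The plan is to read off the birational equivalence directly from the proof of Theorem~\ref{dimension}. Recall that the dimension count was achieved by exhibiting an explicit transcendence basis for the quotient field $L$ of $K[P,Q]$, and the key observation is that this basis was built from two geometrically natural families of functions: the products $u_q=\prod_{p\in P}x_{p,q}$ indexed by $q\in Q$, and the ratios $x_{p,q}/x_{p,q'}$. I would first make precise the statement that $X_{P,Q}$ is birational to $Y_{n,m}$ by showing that the two function fields $L=\operatorname{Frac}(K[P,Q])$ and $\operatorname{Frac}(K[Y_{n,m}])$ are $K$-isomorphic; since $X_{P,Q}$ and $Y_{n,m}$ are irreducible varieties, an isomorphism of their function fields is exactly a birational equivalence, so this is all that needs to be verified.

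Next I would identify the function field of $Y_{n,m}$ concretely. The coordinate ring of $Y_{n,m}$ is the $n$-fold Segre product of the $m$-dimensional polynomial ring; if we write the $n$ copies of the polynomial ring in variables $z_{p,q}$ with $p\in P$ (indexing the copies, $|P|=n$) and $q\in Q$ ($|Q|=m$), then the Segre product is generated by the monomials $\prod_{p\in P}z_{p,q_p}$. Its quotient field is well known to be generated over $K$ by the single monomial $\prod_{p}z_{p,q}$ together with all the ratios $z_{p,q}/z_{p,q'}$; equivalently, $\operatorname{Frac}(K[Y_{n,m}])$ is the subfield of $K(z_{p,q})$ consisting of those rational functions that are homogeneous of the same total degree in each of the $n$ groups of variables. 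I would then define the map sending $x_{p,q}\mapsto z_{p,q}$ and check that it carries $K[P,Q]$ into this subfield and induces a field isomorphism between $L$ and $\operatorname{Frac}(K[Y_{n,m}])$.

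The verification that this map is an isomorphism of fields is where the work of the dimension proof pays off. On one hand the generators $u_\varphi=\prod_{p}x_{p,\varphi(p)}$ of $K[P,Q]$ map to Segre-homogeneous monomials, so $L$ embeds into $\operatorname{Frac}(K[Y_{n,m}])$. On the other hand, the proof of Theorem~\ref{dimension} already shows that $L$ contains all the elements $u_q$ and all the ratios $x_{p,q}/x_{p,q'}$; under $x_{p,q}\mapsto z_{p,q}$ these map onto precisely a generating set for $\operatorname{Frac}(K[Y_{n,m}])$, so the embedding is surjective as well. Thus the induced field map is an isomorphism. Since both $L$ and $\operatorname{Frac}(K[Y_{n,m}])$ have transcendence degree $n(m-1)+1$ over $K$ by Theorem~\ref{dimension}, the isomorphism is also consistent with the dimension count, which serves as a useful sanity check.

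The main obstacle, such as it is, is purely bookkeeping: one must pin down the correct description of $\operatorname{Frac}(K[Y_{n,m}])$ as the field of Segre-multihomogeneous rational functions and match its natural generators against the families (i) and (ii) appearing in the proof of Theorem~\ref{dimension}. No new ideas beyond that proof are needed; the birational equivalence is essentially a reinterpretation of the transcendence basis already constructed, which is why the statement is phrased as a corollary. Once the dictionary $x_{p,q}\leftrightarrow z_{p,q}$ is in place, both inclusions of function fields are immediate, and the birationality of $X_{P,Q}$ and $Y_{n,m}$ follows.
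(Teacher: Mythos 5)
Your proposal is correct and follows essentially the same route as the paper: the paper also realizes $Y_{n,m}$ as the toric ring $T=S_1*\cdots*S_n$ generated by \emph{all} monomials $\prod_{j}x_{p_j,q_j}$ (your $z_{p,q}$ are just the $x_{p,q}$ themselves, so no renaming is even needed) and observes that the proof of Theorem~\ref{dimension} already shows the quotient field of $K[P,Q]$ equals that of $T$, since both are generated by the elements (i) and (ii). Your extra bookkeeping about multihomogeneous rational functions is harmless but unnecessary once one notes $K[P,Q]\subset T$ inside the same polynomial ring.
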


\begin{proof} Let $P=\{p_1,\ldots,p_n\}$, and let $L$ be the quotient field of $K[P,Q]$ and $T$ be the toric ring  whose generators are of the form $\prod_{j=1}^nx_{p_j,q_{j}}$ with  $ q_{j}\in Q$. Note that
$$T=S_1*S_2*\cdots *S_n,$$
where $S_i=K[x_{p_i,q}\: q\in Q]$ for $i=1,\ldots,n$. It was shown in the proof of Theorem~\ref{dimension} that  $L$ is also the quotient field of $T$.
This yields the desired conclusion.
\end{proof}

\section{Normality of $K[P,Q]$}

In this section we prove  normality of $K[P,Q]$  in certain cases. As $K[P,Q]$ is a toric ring, normality of $K[P,Q]$, by a theorem of Hochster \cite[Theorem 1]{Ho},  implies   that $K[P,Q]$ is Cohen--Macaulay as well.

Let $H$ be an affine semigroup and $\ZZ H$ be the associated group of $H$. The semigroup $H$ is normal, if  whenever $da\in H$ for $a\in \ZZ H$  and some $d\in\NN$, then $a\in H$. By \cite[Theorem 6.1.4]{BH}, $K[H]$ is normal if and only if $H$ is normal. We apply this criterion to  $K[P,Q]$. Since $K[P,Q]$ is a toric ring,  there exists an   affine semigroup $H$   such that $K[H]=K[P,Q]$.  It follows that $K[P,Q]$ is normal if and only if $H$ is normal.  In our particular situation, when $P=\{p_1,\ldots,p_n\}$,  the monomials in  $K[\ZZ H]$ corresponding to the elements in $\ZZ H$ are of the form
$u_1^{\pm 1}u_2^{\pm 1}  \cdots u_s^{\pm 1}$ with $u_i=x_{p_1,q_{i1}}x_{p_2,q_{i2}}\cdots x_{p_n,q_{in}}$ for $i=1,\ldots,s$,  $s\geq 1$ and $q_{ij}\in Q$.  Thus it will follow that $K[P,Q]$ is normal if whenever $(u_1^{\pm 1}u_2^{\pm 1}  \cdots u_s^{\pm 1})^d\in K[P,Q]$, then $u_1^{\pm 1}u_2^{\pm 1}  \cdots u_s^{\pm 1}\in K[P,Q]$.

\begin{Lemma}\label{restriction}
Let $P,Q$ be two finite posets with $P=\{p_1,\ldots,p_n\}$,    and let as before $S_i=K[x_{p_i,q}\: q\in Q]$ for $i=1,\ldots,n$. Assume that for any integer $d>1$ and any sequence of monomials $v_1,\ldots, v_t\in S_1*S_2*\cdots *S_n$ with   $(v_1v_2  \cdots v_t)^d\in K[P,Q]$ it follows that  $v_1v_2  \cdots v_t\in K[P,Q]$. Then the toric ring $K[P,Q]$ is normal.
\end{Lemma}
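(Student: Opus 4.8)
The plan is to use the normality of the ambient Segre product $T = S_1 * S_2 * \cdots * S_n$ as a bridge: its normality lets us replace an arbitrary element of $\ZZ H$ --- a priori a Laurent monomial carrying negative exponents --- by a genuine (positive) monomial of $T$, to which the hypothesis of the lemma then applies directly.

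First I would fix notation. Let $H$ be the affine semigroup with $K[H] = K[P,Q]$, and let $H_T$ be the affine semigroup of $T$, so that $K[H_T] = T$. Since each generator $u_\varphi$ of $K[P,Q]$ is a monomial of $T$, we have $H \subseteq H_T$ and hence $\ZZ H \subseteq \ZZ H_T$. By the criterion \cite[Theorem 6.1.4]{BH} it suffices to prove that $H$ is normal, that is: given $w \in \ZZ H$ and $d \in \NN$ with $w^d \in H$, we must deduce $w \in H$. The case $d = 1$ is trivial, so we may assume $d > 1$.

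The key step is the normality of $T$. Being a Segre product of the polynomial rings $S_i$, the ring $T$ is normal by the fact recalled in the proof of Corollary~\ref{normalconnected}; equivalently, $H_T$ is a normal semigroup. Now $w \in \ZZ H \subseteq \ZZ H_T$ while $w^d \in H \subseteq H_T$, so normality of $H_T$ forces $w \in H_T$. In multiplicative terms this says that the Laurent monomial $w$ is in fact a genuine monomial of $T$; writing it as a product $w = v_1 v_2 \cdots v_t$ of monomials of $S_1 * S_2 * \cdots * S_n$, we have cleared all negative exponents.

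It remains to invoke the hypothesis. Since $(v_1 v_2 \cdots v_t)^d = w^d \in K[P,Q]$ and $d > 1$, the assumption of the lemma gives $v_1 v_2 \cdots v_t \in K[P,Q]$, i.e.\ $w \in H$. This proves that $H$ is normal, and therefore $K[P,Q] = K[H]$ is normal. The only genuine content lies in the middle step: without the normality of $T$ one would be stuck manipulating Laurent monomials with real inverses, whereas the lemma's hypothesis speaks only of positive products $v_1 \cdots v_t$ of monomials of $T$. Clearing the negative exponents via the normality of the Segre product is precisely what reduces the full normality test for $K[P,Q]$ to the positive case covered by the hypothesis; the remaining steps are routine translations between the semigroup language of \cite[Theorem 6.1.4]{BH} and monomials.
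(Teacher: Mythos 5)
Your proposal is correct, and it differs from the paper's proof precisely in the one step that carries real content. Both arguments share the same skeleton: pass to the semigroup criterion of \cite[Theorem 6.1.4]{BH}, show that the Laurent monomial $w$ being tested is in fact a genuine monomial of the Segre product $T=S_1*S_2*\cdots *S_n$, and then feed the resulting factorization $w=v_1\cdots v_t$ into the hypothesis of the lemma. The difference lies in how one sees that $w$ has no negative exponents. You obtain this abstractly: $T$ is normal (the fact about Segre products of normal standard graded toric rings already invoked in Corollary~\ref{normalconnected}), so $w\in \ZZ H\subseteq \ZZ H_T$ together with $w^d\in H\subseteq H_T$ forces $w\in H_T$. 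The paper instead clears the negative exponents by hand: writing $w^d=(u_1^{-1}\cdots u_r^{-1})^d(u_{r+1}\cdots u_s)^d$ and using that monomials of $K[P,Q]$ have only nonnegative exponents, it matches each block $\prod_{k\le r} x_{p_i,q_{ki}}^{-d}$ against canceling positive factors, so that after reindexing the surviving factors form $(v_1\cdots v_{s-2r})^d$; it then recovers $w=v_1\cdots v_{s-2r}$ by taking $d$-th roots of equal powers of monomials, using that $K[P,Q]$ is a toric domain. What your route buys: it is shorter and more conceptual, it avoids the ``without loss of generality'' bookkeeping of the cancellation (which in the paper silently reorders factors row by row), and it never needs to extract a $d$-th root, since the normality of $H_T$ is applied to $w$ itself rather than to $w^d$; moreover the fact it relies on is one the paper already uses, so nothing external is imported. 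What the paper's route buys: it is elementary and self-contained, using only positivity of exponents, and it exhibits the decomposition $v_1\cdots v_t$ explicitly rather than through an appeal to normality of $T$. Both are complete proofs of the lemma.
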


\begin{proof}
	Suppose $(u_1^{\pm 1}u_2^{\pm 1}\cdots u_s^{\pm 1})^d\in K[P,Q]$. We show that there exist monomials  $v_1,\ldots, v_t\in S_1*S_2*\cdots *S_n$ such that  $(u_1^{\pm 1}u_2^{\pm 1}\cdots u_s^{\pm 1})^d=(v_1\cdots v_t)^d$. Then our assumption implies that  $v_1\cdots v_t\in K[P,Q]$. Since  $(u_1^{\pm 1}u_2^{\pm 1}\cdots u_s^{\pm 1})^d=(v_1\cdots v_t)^d$ and since $K[P,Q]$ is a toric ring, we have $u_1^{\pm 1}u_2^{\pm 1}\cdots u_s^{\pm 1}=v_1\cdots v_t$, and so the desired conclusion follows.
	
	Without loss of generality we may assume that
	$$(u_1^{\pm 1}u_2^{\pm 1}\cdots u_s^{\pm 1})^d=(u_1^{-1}\cdots u_r^{-1})^d(u_{r+1}\cdots u_s)^d$$
	 for some $r\leq s$. Let $u_i=\prod_{j=1}^{n}x_{p_j,q_{ij}}$ for $1\leq i\leq s$. Then
	 \[
	 (u_1^{-1}\cdots u_r^{-1})^d(u_{r+1}\cdots u_s)^d=\prod_{i=1}^{n}[(\prod_{k=1}^{r}x_{p_i,q_{ki}}^{-d})(\prod_{k=r+1}^{s}x_{p_i,q_{ki}}^{d})]
	 \]
	belongs to $K[P,Q]$. Since the elements in $K[P,Q]$ have no negative powers, it follows that each factor in $\prod_{k=1}^{r}x_{p_i,q_{ki}}^{-d}$ cancels against a factor in $\prod_{k=r+1}^{s}x_{p_i,q_{ki}}^{d}$. Without loss of generality we may assume that for each $i$, $\prod_{k=1}^{r}x_{p_i,q_{ki}}^{-d}$ cancels against $\prod_{k=r+1}^{2r}x_{p_i,q_{ki}}^{d}$. Then
	\[
	(u_1^{\pm 1}u_2^{\pm 1}\cdots u_s^{\pm 1})^d=
	\prod_{i=1}^{n}(\prod_{k=2r+1}^{s}x_{p_i,q_{ki}}^{d})=(v_1\cdots v_{s-2r})^d,
	\]
	where $v_j=\prod_{i=1}^{n}x_{p_i,q_{2r+j,i}}$.
\end{proof}

Based on the criterion given in Lemma~\ref{restriction} we show

\begin{Theorem}\label{normaltheorem}
Let $P$ and $Q$ be finite posets, and   let $P'$ be a poset which is obtained from $P$ by adding an element $p'$ to $P$  with the property that $p'$ has a unique neighbor in $G(P')$.  Then $K[P',Q]$ is normal if and only if $K[P,Q]$ is normal.
\end{Theorem}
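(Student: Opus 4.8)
The plan is to apply the normality criterion of Lemma~\ref{restriction} in both directions, after three preliminary reductions. First, by Lemma~\ref{segre}(b), $K[P,Q]\iso K[P,Q_1]\tensor\cdots\tensor K[P,Q_s]$ over the connected components $Q_j$ of $Q$; the defining semigroup of a tensor product of affine semigroup rings is the direct sum of the factor semigroups, and a direct sum is saturated if and only if each summand is, so normality of $K[P,Q]$ is equivalent to normality of all $K[P,Q_j]$, and likewise for $P'$. Hence I may assume $Q$ is connected. Second, by Lemma~\ref{obvious} we have $K[P',Q]\iso K[(P')^\vee,Q^\vee]$, and in $(P')^\vee$ the element $p'$ becomes a maximal element which covers its unique neighbor $p_0$; since $Q^\vee$ is again connected, after this replacement I may assume that $p'$ covers $p_0$ and is maximal, so that a map $\chi\:P'\to Q$ is isotone exactly when $\chi|_P$ is isotone and $\chi(p')\geq\chi(p_0)$. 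Third, the type-(i) and type-(ii) elements from the proof of Theorem~\ref{dimension} show that $x_{p,q}/x_{p,q'}\in\ZZ H$ for comparable $q,q'$ (hence for all $q,q'$, as $Q$ is connected) and that the constant maps $u_q$ lie in $H$; these generate the full Segre lattice $M$ of $S_1*\cdots*S_n$ (the sublattice of $\ZZ^{P\times Q}$ on which all block sums agree), so $\ZZ H=M$. Consequently any monomial $w$ of the Segre product automatically lies in $\ZZ H$, and the hypothesis of Lemma~\ref{restriction} becomes equivalent to normality; the same holds for $P'$.

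For the implication ``$K[P,Q]$ normal $\Rightarrow K[P',Q]$ normal'' I verify the criterion of Lemma~\ref{restriction} for $P'$. Let $w$ be a monomial of $S_1*\cdots*S_n*S'$ of block-degree $e$ with $w^d=\prod_{k}u_{\chi_k}$, $\chi_k\in\Hom(P',Q)$. Restricting to $P$ gives $w_P^{\,d}=\prod_k u_{\chi_k|_P}\in K[P,Q]$, so normality yields $w_P=\prod_{l=1}^e u_{\psi_l}$ with $\psi_l\in\Hom(P,Q)$. Writing the $p'$-block of $w$ as $\prod_{l=1}^e x_{p',c_l}$, it suffices to match each $\psi_l$ to a value $c$ with $c\geq\psi_l(p_0)$, for then the isotone extensions $\chi_l$ with $\chi_l|_P=\psi_l$ and $\chi_l(p')=c$ give $w=\prod_l u_{\chi_l}\in K[P',Q]$. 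Such a perfect matching exists by Hall's theorem once $\#\{l:\psi_l(p_0)\in F\}\leq\#\{l:c_l\in F\}$ for every up-set $F\subseteq Q$. This is where the hypothesis enters: comparing the $p_0$- and $p'$-exponents of $w^d$ gives $\#\{k:\chi_k(p_0)\in F\}=d\cdot\#\{l:\psi_l(p_0)\in F\}$ and $\#\{k:\chi_k(p')\in F\}=d\cdot\#\{l:c_l\in F\}$, while $\chi_k(p')\geq\chi_k(p_0)$ together with $F$ being an up-set forces $\#\{k:\chi_k(p_0)\in F\}\leq\#\{k:\chi_k(p')\in F\}$; dividing by $d$ gives the inequality.

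For the converse I verify the criterion for $P$. Given a Segre monomial $w_P$ with $w_P^{\,d}\in K[P,Q]$, decompose $w_P^{\,d}=\prod_{k}u_{\psi_k}$ and extend each $\psi_k$ to $P'$ by the minimal admissible value $\phi_k(p')=\psi_k(p_0)$, which is isotone as $p'$ is maximal. The key point is that the resulting $p'$-block $\prod_k x_{p',\psi_k(p_0)}$ is a perfect $d$-th power: the multiset $\{\psi_k(p_0)\}_k$ is just the $p_0$-exponent multiset of $w_P^{\,d}$, hence equals $d$ copies of the $p_0$-exponent multiset of $w_P$. Thus $\prod_k u_{\phi_k}=(w_P\,w')^d$ for $w'=\prod_q x_{p',q}^{m_q}$, where $m_q$ is the multiplicity of $q$ among the $p_0$-values of $w_P$; setting $w=w_P\,w'$ gives $w^d\in K[P',Q]$. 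Since $w$ is a Segre monomial of $S_1*\cdots*S_n*S'$, the lattice identification $\ZZ H'=M'$ yields $w\in\ZZ H'$, so normality of $K[P',Q]$ gives $w=\prod_l u_{\theta_l}\in K[P',Q]$; restricting the factors $\theta_l$ to $P$ gives $w_P=\prod_l u_{\theta_l|_P}\in K[P,Q]$.

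The main obstacle is the forward direction: the isotone extensions on $P'$ exist only if the fixed value-multisets $\{\psi_l(p_0)\}$ and $\{c_l\}$ admit a bijective domination-matching, and the genuine content is the up-set counting inequality that feeds Hall's theorem, which is precisely what the isotonicity condition $\chi_k(p')\geq\chi_k(p_0)$ buys us. By comparison, the converse direction (after the perfect-$d$-th-power observation) and the identity $\ZZ H=M$ are routine bookkeeping.
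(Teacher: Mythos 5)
Your core argument is essentially the paper's own proof, reorganized: both rest on Lemma~\ref{restriction}, on dualizing via Lemma~\ref{obvious} so that $p'$ covers its unique neighbor $p_0$, on extending isotone maps from $P$ to $P'$ by assigning $p'$ the value at $p_0$, and on Hall's marriage theorem for the direction ``$K[P,Q]$ normal $\Rightarrow K[P',Q]$ normal''. Your up-set counting inequality is a repackaging of the paper's argument that a Hall violator $S$ for the bipartite graph $G$ attached to the factors scales to a Hall violator $S^{(d)}$ for $G^{(d)}$; your direct verification in the converse direction replaces the paper's proof by contradiction, with the same extension $\varphi(p'):=\varphi(p_0)$ as the key step. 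One place where you are genuinely more careful than the paper: you prove $\ZZ H=M$ (the group of the defining semigroup equals the full Segre lattice) for connected $Q$, which is needed before normality --- i.e.\ saturation inside $\ZZ H$ --- can be applied to a Segre monomial; the paper uses this fact silently in both directions.

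There is, however, one concrete gap: your opening reduction to connected $Q$ invokes Lemma~\ref{segre}(b), whose hypothesis is that $P$ is connected, and you never arrange this. The decomposition $K[P,Q]\iso K[P,Q_1]\tensor\cdots\tensor K[P,Q_s]$ is genuinely false for disconnected $P$: an isotone map from a disconnected $P$ may send different components of $P$ into different components of $Q$, so $K[P,Q]$ has generators that are not products of generators of the $K[P,Q_j]$ (take $P$ and $Q$ both two-element antichains: the left side has four generators and dimension $3$, the right side two generators and dimension $2$). This matters because the identity $\ZZ H=M$, on which both of your directions rely, really does require $Q$ connected --- for $P$ a two-element chain and $Q$ a two-element antichain, $\ZZ H$ has rank $2$ while $M$ has rank $3$ --- so the reduction cannot simply be dropped. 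The repair is to first reduce to connected $P$ (equivalently $P'$), as the paper does via Corollary~\ref{normalconnected} and Lemma~\ref{segre}(a), and only then split $Q$ into components; note that this component reduction quietly uses that normality descends from a Segre (resp.\ tensor) product to its factors, a point which both you and the paper leave unargued (it holds, e.g., because the factors are combinatorial pure subrings, respectively retracts, of the product).
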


\begin{proof}
By  Corollary~\ref{normalconnected} we may assume that  $P$ is connected. Let  $P = \{p_1 ,\ldots, p_n\}$ and set $p_{n+1}=p'$. We may assume that $p_n$ is the unique  neighbor of $p_{n+1}$ in $G(P)$ and that $p_n<p_{n+1}$. In other words, $p_n$ is the unique element  in $P$ covered by $p_{n+1}$.

Suppose first that $K[P',Q]$ is normal but $K[P,Q]$ is  not normal. Then  there exist monomials $u_1,\ldots,u_s \in S_1*\cdots *S_n$
such that $(u_1\cdots u_s)^d \in K[P,Q]$ but $u_1\cdots u_s \not\in K[P,Q]$.

We write $\varphi_i$  for the (not necessarily isotone) map $P \to Q$
corresponding to $u_i$, and define the map $\varphi'_i : P' \to  Q$
 by setting
 \[
 \varphi'_i(p)
 =\left\{
  \begin{array}{ll}
    \varphi_i(p), & \hbox{if $p\neq p_{n+1}$}, \\
     \varphi_i(p_n),  & \hbox{if $p= p_{n+1}$}.
  \end{array}
\right.
\]
Let $u_i'$ be the monomial corresponding to $\phi'_i$. Then we have
$(u'_1\cdots u'_s)^d \in K[P',Q]$ but   $u'_1\cdots u'_s \not \in  K[P',Q]$, a contradiction.

Conversely,  suppose that $K[P,Q]$ is normal.
Let $u_1,\ldots,u_s\in S_1*S_1*\cdots * S_{n+1}$ with $u_i=\prod_{j=1}^{n+1}x_{p_j,q_{ij}}$ for $i=1,\ldots,s$. By Lemma~\ref{restriction} it is enough to show that if $(u_1u_2\cdots u_s)^d\in K[P',Q]$ for some $d\in \NN$, then $u_1u_2\cdots u_s\in K[P',Q]$.

In order  to prove this, we first observe that $\varphi:\ P'\to Q$ is isotone, if and only if $\varphi(p)\leq \varphi(q)$ for all pairs $p, q\in P'$ for which   $q$ covers $p$. Hence, $\varphi:\ P'\to Q$ is isotone if and only if the  restriction  of $\varphi$ to $P$ is isotone and $\varphi(p_n)\leq \varphi(p_{n+1})$.

As a consequence of this observation we obtain the following statement:  let $v_1,\ldots,v_t\in S_1*S_1*\cdots * S_{n+1}$ with $v_i= \prod_{j=1}^{n+1}x_{p_j,q'_{ij}}$ for $i=1,\ldots,t$, and set $\tilde{v}_i= \prod_{j=1}^{n}x_{p_j,q'_{ij}}$ for $i=1,\ldots,t$.
Then
 $v_1\cdots v_t\in K[P',Q]$
if and only if
\begin{enumerate}
\item[(i)] $\tilde{v}_1\cdots \tilde{v}_t\in K[P,Q]$,

\item[(ii)] there exists a permutation $\pi\: [t]\to [t]$ such that $q'_{in}\leq q'_{\pi(i),n+1}$ for all $i$.
\end{enumerate}

\medskip
Condition (ii) can be rephrased as follows: let $G$ be the bipartite graph with bipartition $V(G)=V_1\union V_2$, where $V_1=\{x_1,\ldots, x_t\}$ and $V_2=\{y_1,\ldots,y_t\}$. We let  $\{x_i,y_j\}$ be  an edge of $G$ if and only if $q'_{in}\leq q'_{j,n+1}$. We call $G$ the graph attached to $v_1,\ldots,v_t$. By definition, a perfect matching of $G$ is a disjoint union of $t$ edges of $G$.

Now condition (ii) is equivalent to the following condition:
\begin{enumerate}
\item[(ii$'$)] The graph $G$ attached to $v_1,\ldots,v_t$ admits a perfect matching.
\end{enumerate}

Now we come back to the sequence $u_1,\ldots,u_s\in S_1*S_1*\cdots * S_{n+1}$ with $u_i=\prod_{j=1}^{n+1}x_{p_j,q_{ij}}$ for $i=1,\ldots,s$ for which $(u_1u_2\cdots u_s)^d\in K[P',Q]$.

Then (i) implies that $(\tilde{u}_1\cdots \tilde{u}_s)^d\in K[P,Q]$. By assumption, $K[P,Q]$ is normal. This implies that $\tilde{u}_1\cdots \tilde{u}_s\in K[P,Q]$, where $\tilde{u}_i=\prod_{j=1}^{n}x_{p_j,q_{ij}}$. Thus it will follow that $u_1\cdots u_s\in K[P',Q]$ if the graph $G$ attached to $u_1,\ldots, u_s$ admits a perfect matching.

We let $G^{(d)}$ be the graph attached to $d$ copies of $u_1,\ldots,u_s$:
\[
\underbrace{u_1,\ldots,u_s,\  \ldots\ , u_1,\ldots,u_s}_{d\  \text{times}}.
\]
Since $(u_1u_2\cdots u_s)^d\in K[P,Q]$ it follows from (ii$'$) that  $G^{(d)}$ admits
a perfect matching.

 Note that $G^{(d)}$ may be viewed as the bipartite graph with vertex decomposition $V(G^{(d)}) =V_1^{(d)}\union V_2^{(d)}$, where $V_1^{(d)}=\{x_i^{(k)}\:\;  i=1,\ldots,s, k=1,\ldots,d\}$ and $V_2^{(d)}=\{y_i^{(k)}\: i=1,\ldots,s, k=1,\ldots,d\}$.  The edges of  $G^{(d)}$ are $\{x_i^{(k)},y_j^{(l)}\}$ with $q_{in}\leq q_{j,n+1}$ and $k,l\in [d]$.

Suppose that $G$ has no perfect matching. Then Hall's marriage theorem (see for example \cite[Lemma9.1.2]{HHBook}) implies that there exists a set $S\subset V_1$ such that $|N_G(S)|<S$. Here
\[
N_G(S)=\{y_j\:\; \{x_i,y_j\} \text{ with } x_i\in S\}
\]
is the set of neighbors of $S$.

Let $S^{(d)} = \{x_i^{(k)} \:\;  x_i \in S,  1\leq k\leq d\}$.
Then  $S^{(d)} \subset V_1^{(d)}$ and
\[
|N_{G^{(d)}}(S^{(d)})| = d|N_G(S)|<d|S|=|S^{(d)}|.
\]
Thus,  $G^{(d)}$ does not have a perfect matching, a contradiction.
\end{proof}

By using Theorem~\ref{normaltheorem} and an obvious induction argument, we obtain

\begin{Corollary}
\label{normal}
Let $P$ and  $Q$ be finite posets. Then $K[P,Q]$ is a normal Cohen--Macaulay domain if $G(P)$ is a tree.
\end{Corollary}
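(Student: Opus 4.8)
The plan is to induct on $n=\abs{P}$, removing one leaf of the Hasse diagram at each stage and applying Theorem~\ref{normaltheorem}. Since $G(P)$ is a tree it is connected, so I need not first decompose $P$ into connected components; the same argument together with Corollary~\ref{normalconnected} would cover the case in which $G(P)$ is only a forest. For the base case $n=1$ the poset $P$ is a single element $p$, every map $P\to Q$ is isotone, so $\Hom(P,Q)$ is in bijection with $Q$ and $K[P,Q]=K[x_{p,q}\: q\in Q]$ is a polynomial ring, hence normal.

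For the inductive step let $n>1$. A finite tree on at least two vertices has a leaf, so some $p'\in P$ has a unique neighbor in $G(P)$. Put $P_0=P\setminus\{p'\}$ with the induced order. The crux is to check that $G(P_0)$ is obtained from $G(P)$ merely by deleting the vertex $p'$ and its one incident edge; granting this, $G(P_0)$ is again a tree, now on $n-1$ vertices. Then $P$ arises from $P_0$ by adjoining an element with a unique neighbor in the Hasse diagram, so Theorem~\ref{normaltheorem}, applied with $P_0$ and $P$ in the roles of $P$ and $P'$, shows that $K[P,Q]$ is normal if and only if $K[P_0,Q]$ is normal. By the induction hypothesis $K[P_0,Q]$ is normal, and hence so is $K[P,Q]$.

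The step that genuinely requires care---and the main obstacle---is the claim $G(P_0)=G(P)\setminus\{p'\}$, i.e.\ that deleting a leaf creates no new covering relations. I would first observe that a leaf $p'$ with unique neighbor $p_n$ must be an extremal element of $P$: the single edge at $p'$ says either $p_n$ covers $p'$ or $p'$ covers $p_n$, and in the former case any element strictly below $p'$ would produce a second lower neighbor of $p'$, contradicting uniqueness, so $p'$ is minimal, while dually in the latter case $p'$ is maximal. Being extremal, $p'$ cannot sit strictly between two elements $a<b$ of $P_0$; consequently a covering $a\lessdot b$ holds in $P_0$ precisely when it holds in $P$, which is exactly the assertion $G(P_0)=G(P)\setminus\{p'\}$.

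Finally, since $K[P,Q]$ is a toric ring it is a domain, and its normality implies Cohen--Macaulayness by Hochster's theorem \cite[Theorem 1]{Ho}, completing the argument.
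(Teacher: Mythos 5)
Your proposal is correct and follows exactly the route the paper intends: the paper derives the corollary from Theorem~\ref{normaltheorem} ``and an obvious induction argument,'' which is precisely your induction on $\abs{P}$, deleting a leaf of $G(P)$ at each step. Your verification that a leaf of the Hasse diagram is an extremal element of $P$, so that $G(P\setminus\{p'\})=G(P)\setminus\{p'\}$ and Theorem~\ref{normaltheorem} genuinely applies, is the detail the paper leaves implicit, and you handle it correctly.
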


Corollary~\ref{normal} implies in particular that $K[[n] , Q]$ is normal. On the other hand, the normality of $K[P,[n]]$ has been proved in \cite[Corollary 4.2]{EHM}.  These types of algebras are called {\em letterplace algebras}. Thus we have

\begin{Corollary}
\label{letterplace}
All letterplace algebras are normal Cohen--Macaulay domains.
\end{Corollary}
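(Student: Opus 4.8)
The plan is to treat the two families of letterplace algebras, $K[[n],Q]$ and $K[P,[n]]$, separately, since each falls under a result already at our disposal. In both cases the domain property comes for free: by construction $K[P,Q]$ is a toric ring, i.e.\ a $K$-subalgebra of a polynomial ring generated by monomials, hence an integral domain. Likewise, once normality is established, Cohen--Macaulayness is automatic, because a normal affine semigroup ring is Cohen--Macaulay by Hochster's theorem \cite[Theorem 1]{Ho}. Thus the whole statement reduces to verifying \emph{normality} in each of the two cases.

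First I would dispose of $K[[n],Q]$. The poset $[n]=\{1<2<\cdots<n\}$ is a chain, so its Hasse diagram $G([n])$ is a path on $n$ vertices, which is in particular a tree. Corollary~\ref{normal} then applies verbatim and yields that $K[[n],Q]$ is a normal Cohen--Macaulay domain for every finite poset $Q$.

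For the second family $K[P,[n]]$, normality has already been established in the literature: it is precisely \cite[Corollary 4.2]{EHM}. With normality in hand, Hochster's theorem again supplies Cohen--Macaulayness, and the toric description gives the domain property, so this case is complete as well.

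The only point that requires a moment's care --- and it is the closest thing to an obstacle --- is the observation that $K[P,[n]]$ is genuinely \emph{not} covered by Corollary~\ref{normal}, since the Hasse diagram of a general $P$ need not be a tree; here one really must invoke the external input \cite[Corollary 4.2]{EHM}. Once the two cases are recorded, the corollary follows by simply noting that every letterplace algebra is of one of the two forms $K[[n],Q]$ or $K[P,[n]]$.
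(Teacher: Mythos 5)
Your proof is correct and follows exactly the paper's own argument: $K[[n],Q]$ is handled by Corollary~\ref{normal} since the Hasse diagram of a chain is a tree, $K[P,[n]]$ is handled by citing \cite[Corollary 4.2]{EHM}, and Cohen--Macaulayness and the domain property come from Hochster's theorem and the toric construction, respectively. No gaps; this is the same two-case decomposition the paper uses.
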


It seems that $K[P,Q]$ is normal, not only when $G(P)$ is a tree. For example,  it can be shown by using Normaliz \cite{N} that   the isotonian algebra  $K[Q,Q]$ is normal for the poset $Q$ shown  in Figure~\ref{salam}.

Further computational evidence  and the above  special cases lead us to the following

\begin{Conjecture}
\label{webelieveit}
Let $P$ and $Q$ be finite posets. Then $K[P,Q]$ is a normal Cohen--Macaulay domain.
\end{Conjecture}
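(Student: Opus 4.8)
The plan is to prove the conjecture by the route sketched in the introduction: exhibit, for a suitable term order, a squarefree initial ideal of the defining ideal $J_{P,Q}$, so that Cohen--Macaulayness and the domain property follow automatically. First I would reduce to the case where both $P$ and $Q$ are connected. By Lemma~\ref{segre} the algebra $K[P,Q]$ is assembled from the pieces $K[P_i,Q_j]$, with $P_i$ and $Q_j$ the connected components of $P$ and $Q$, by iterated Segre and tensor products, and by Corollary~\ref{normalconnected} normality of all the $K[P_i,Q_j]$ forces normality of $K[P,Q]$. Hence it suffices to treat connected $P$ and connected $Q$; I may also use the self-duality $K[P,Q]\iso K[P^\vee,Q^\vee]$ of Lemma~\ref{obvious} to replace the pair $(P,Q)$ by $(P^\vee,Q^\vee)$ whenever that simplifies the combinatorics.

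For the connected case I would look for a monomial order $<$ on the variables $x_{p,q}$, induced by a canonical labelling, for which $\ini_<(J_{P,Q})$ is generated by squarefree monomials. Once such an order is found, Sturmfels' criterion \cite[Chapter~8]{St} gives that $K[P,Q]$ is normal, Hochster's theorem \cite[Theorem~1]{Ho} then yields Cohen--Macaulayness, and the domain property is free because $K[P,Q]$ is a toric ring. A working template already exists: the reverse lexicographic order induced by a canonical labelling produces a quadratic---hence squarefree---Gr\"obner basis when $P$ is a chain and $Q$ is rooted or co-rooted (Theorem~\ref{Boston}). I would try to propagate this from chains to arbitrary connected $P$ by an inductive leaf-attachment argument in the spirit of Theorem~\ref{normaltheorem}.

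The main obstacle is precisely the phenomenon flagged in the introduction: the generators of $J_{P,Q}$ can have arbitrarily large degree, and this happens exactly when $Q$ contains an induced poset cycle. There is therefore no uniform degree bound on a Gr\"obner basis, and the $S$-pairs coming from the long cycle relations must be reduced simultaneously with the quadratic ones; producing a single term order that keeps all the resulting leading terms squarefree is the crux, and it is what currently keeps the statement a conjecture rather than a theorem.

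As an alternative that avoids Gr\"obner bases, I would try to extend the semigroup argument behind Theorem~\ref{normaltheorem}. There, adding an element $p'$ with a \emph{unique} neighbor reduces the normality test $da\in H\Rightarrow a\in H$ to the existence of a perfect matching in an attached bipartite graph, settled by Hall's theorem \cite[Lemma~9.1.2]{HHBook}. To reach the full conjecture one must allow $p'$ to have several neighbors, i.e.\ allow $G(P)$ to carry cycles, and the clean matching reformulation then breaks down: it would have to be replaced by a direct polyhedral analysis of the Hilbert basis of the cone defining the semigroup $H$. I expect this polyhedral step---checking that every lattice point that becomes integral after scaling already lies in $H$---to be the hard part, and in difficulty it seems comparable to the squarefree-initial-ideal approach above.
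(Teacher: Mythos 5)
The statement you were asked to prove is Conjecture~\ref{webelieveit}: the paper itself does \emph{not} prove it, and neither do you. Your proposal is a research plan, not a proof, and you say so yourself (``it is what currently keeps the statement a conjecture rather than a theorem''). What you have written reproduces, accurately, the paper's own partial progress and suggested strategies: the reduction to connected $P$ and $Q$ via Lemma~\ref{segre} and Corollary~\ref{normalconnected}, the duality of Lemma~\ref{obvious}, the Sturmfels--Hochster route (squarefree initial ideal $\Rightarrow$ normal $\Rightarrow$ Cohen--Macaulay, with the domain property automatic for toric rings), the known cases (Theorem~\ref{Boston}/Corollary~\ref{Sydney} for $P$ a chain and $Q$ rooted or co-rooted, Corollary~\ref{normal} for $G(P)$ a tree, and the letterplace case $Q=[n]$), and the Hall-marriage argument behind Theorem~\ref{normaltheorem}. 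All of this is correct as far as it goes, but the two steps you flag as ``the crux'' and ``the hard part'' are precisely the entire mathematical content of the conjecture, and they are left untouched.

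Two concrete warnings about the plan itself. First, your proposed ``inductive leaf-attachment argument in the spirit of Theorem~\ref{normaltheorem}'' to propagate Gr\"obner bases from chains to arbitrary connected $P$ conflates two different kinds of statements: Theorem~\ref{normaltheorem} is a semigroup-normality statement (adding an element $p'$ with a unique neighbor preserves normality), and nothing in its proof produces or transports a term order with squarefree initial ideal; there is no known mechanism by which a Gr\"obner basis for $K[P,Q]$ lifts to one for $K[P',Q]$. Moreover, even if it did, leaf attachment only ever builds posets whose Hasse diagram is a tree, which is exactly Corollary~\ref{normal} and falls far short of general $P$. Second, the claim that high-degree generators of $J_{P,Q}$ occur ``exactly when $Q$ contains an induced poset cycle'' is itself only the paper's Conjecture~\ref{Berkeley}, not an established fact, so it cannot be used as a structural input. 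In short: no false statements, correct framing, but a genuine and acknowledged gap --- the statement remains open, and your proposal does not close it.
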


\section{Gr\"obner bases}

Let $P$ and $Q$ be finite posets.
Let $S=K[y_{\varphi}\: \; \varphi \in \Hom(P,Q)]$ be the polynomial ring
in the variables $y_{\varphi}$, and let $\pi : S \rightarrow K[P,Q]$
be the $K$-algebra homomorphism defined by $y_{\varphi} \mapsto \prod_{p\in P} x_{p,\varphi(p)}$.
We denote the kernel of  $\pi$ by $J_{P,Q}$.

Let  $P=\{p_1,p_2,\ldots,p_n\}$. We may assume that the labeling of the elements of $P$ is chosen such that $p_i<p_j$ implies $i<j$. Similarly, $Q=\{q_1,q_2,\ldots,q_m\}$ is labeled. Having fixed this labeling we  sometimes  write $y_{(j_1,j_2,\ldots,j_n)}$ for $y_\varphi$  if $\varphi(p_i)=q_{j_i}$ for $i=1,\ldots,n$.

For $Q=[2]$ the ideal $J_{P,Q}$   is the defining ideal of the Hibi ring associated with  the distributive lattice $L=\Hom(P,[2])$. In this case it is known that $J_{P,Q}$ has a quadratic Gr\"obner basis, see \cite[Theorem 10.1.3]{HHBook}. For arbitrary posets $P$ and $Q$ the ideal $J_{P,Q}$ is not always generated in degree $2$.  The simplest example is given by the posets $P$ and $Q$ displayed in Figure~\ref{salam}.
\begin{figure}[!htbp]
	\begin{center}
\psset{xunit=.8cm,yunit=.8cm,algebraic=true,dimen=middle,dotstyle=o,dotsize=5pt 0,linewidth=0.8pt,arrowsize=3pt 2,arrowinset=0.25}
\begin{pspicture*}(1.,-1.5)(13.,4.)
\psline(3.04,3.14)(3.06,0.42)
\psline(6.92,3.06)(6.92,0.58)
\psline(9.1,3.16)(6.92,0.58)
\psline(9.1,0.6)(9.1,3.16)
\psline(9.1,0.6)(11.4,3.2)
\psline(11.34,0.54)(11.4,3.2)
\rput[tl](2.94,-0.84){$P$}
\rput[tl](8.9,-0.96){$Q$}
\rput[tl](2.94,3.7){\begin{scriptsize}2\end{scriptsize}}
\rput[tl](6.74,0.16){\begin{scriptsize}1\end{scriptsize}}
\rput[tl](8.96,0.2){\begin{scriptsize}2\end{scriptsize}}
\rput[tl](11.24,0.14){\begin{scriptsize}3\end{scriptsize}}
\rput[tl](6.76,3.58){\begin{scriptsize}4\end{scriptsize}}
\rput[tl](8.96,3.66){\begin{scriptsize}5\end{scriptsize}}
\rput[tl](11.24,3.68){\begin{scriptsize}6\end{scriptsize}}
\rput[tl](2.94,0.12){\begin{scriptsize}1\end{scriptsize}}
\psline(11.34,0.54)(6.92,3.06)
\begin{scriptsize}
\psdots[dotstyle=*,linecolor=black](3.04,3.14)
\psdots[dotstyle=*,linecolor=black](3.06,0.42)
\psdots[dotstyle=*,linecolor=black](6.92,3.06)
\psdots[dotstyle=*,linecolor=black](6.92,0.58)
\psdots[dotstyle=*,linecolor=black](9.1,3.16)
\psdots[dotstyle=*,linecolor=black](9.1,0.6)
\psdots[dotstyle=*,linecolor=black](11.4,3.2)
\psdots[dotstyle=*,linecolor=black](11.34,0.54)
\end{scriptsize}
\end{pspicture*}
\caption{$J_{P,Q}$ is not quadratic}\label{salam}
\end{center}
\end{figure}
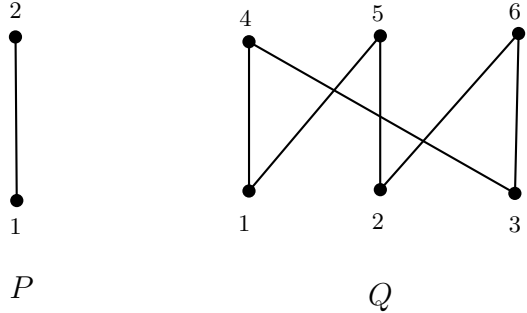

In this example $J_{P,Q}$ is generated by the binomial $y_{(1,4)}y_{(2,5)}y_{(3,6)}-y_{(2,4)}y_{(3,5)}y_{(1,6)}$.

\medskip
It is known
that in general $J_{P,Q}$ is generated by binomials.
Consult \cite{dojoEN} for fundamental materials on
toric ideals and Gr\"obner bases.

We identify each $\phi \in \Hom(P,Q)$ with the sequence
$(j^{(\phi)}_{1}, \ldots, j^{(\phi)}_{n})$, where
$\phi(p_{i}) = q_{j^{(\phi)}_{i}}$ for $1 \leq i \leq n$.
We introduce the total ordering $<$ of the variables
$y_{\phi}$ with $\phi \in \Hom(P,Q)$
by setting $y_{\phi} < y_{\psi}$ if $j^{(\phi)}_{i_{0}} < j^{(\psi)}_{i_{0}}$,
where $i_{0}$ is the smallest integer
for which $j^{(\phi)}_{i_{0}} \neq j^{(\psi)}_{i_{0}}$.
Let $<_{\rm rev}$ denote the reverse lexicographic order
on $S$ induced by the above ordering $<$
of the variables $y_{\phi}$ with $\phi \in \Hom(P,Q)$.

\begin{Example}
\label{sequence}
{\em
Let $P = \{ p_{1}, p_{2}, p_{3} \}$, where $p_{1} < p_{2}$ and $p_{1} < p_{3}$,
and $Q = \{ q_{1}, q_{2}, q_{3} \}$, where $q_{1} < q_{3}$ and $q_{2} < q_{3}$.
Then the total ordering $<$ on the variables
$y_{\phi}$ with $\phi \in \Hom(P,Q)$ is
\[
y_{(1,1,1)}<y_{(1,1,3)}<y_{(1,3,3)}<y_{(2,2,2)}<y_{(2,2,3)}<y_{(2,3,3)}<y_{(3,3,3)}.
\]
}
\end{Example}


We say that $u_{\phi}u_{\psi}$ is {\em nonstandard} with respect to $<_{\rev}$
if there exist $\phi'$ and $\psi'$ for which $u_{\phi}u_{\psi} = u_{\phi'}u_{\psi'}$
and $y_{\phi'}y_{\psi'} <_{\rev} y_{\phi}y_{\psi}$.  An expression
$w = u_{\phi_{1}}u_{\phi_{2}}\cdots u_{\phi_{s}}$
of a monomial $w$ belonging to $K[P,Q]$ is called {\em standard}
if no $y_{\phi_{i}}y_{\phi_{j}}$, where $1 \leq i < j \leq s$, is nonstandard.
It follows that each monomial possesses a standard expression.
However, a standard expression of a monomial may not be unique.

\begin{Example}
\label{cycle}
Let $P = [2]$ and $Q$ be as in Figure~\ref{salam}.
Then each of the expressions
$u_{(1,4)}u_{(2,5)}u_{(3,6)}$ and $u_{(1,5)}u_{(2,6)}u_{(3,4)}$
of the monomial
$x_{11}x_{12}x_{13}x_{24}x_{25}x_{26}$
is standard.
\end{Example}

\begin{Lemma}
\label{basic}
Let $P = \{p_{1}, \ldots, p_{n}\}$ be an arbitrary finite poset and $Q = [2]$.
Then every monomial belonging to $K[P,Q]$ possesses a unique standard expression.
\end{Lemma}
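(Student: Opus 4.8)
The plan is to show that the standard expressions of a monomial $w\in K[P,Q]$ are exactly its factorizations along a multichain of poset ideals, and then to recover that multichain uniquely from the exponents of the variables $x_{p,q_{1}}$. Since $Q=[2]=\{q_{1}<q_{2}\}$, by Birkhoff each $\phi\in\Hom(P,[2])$ is determined by the poset ideal $I_{\phi}=\{p\in P:\phi(p)=q_{1}\}$, and $u_{\phi}=\prod_{p\in I_{\phi}}x_{p,q_{1}}\prod_{p\notin I_{\phi}}x_{p,q_{2}}$. The first thing I would record is the Hibi-type identity $u_{I}u_{J}=u_{I\cap J}\,u_{I\cup J}$, valid for any two ideals $I,J$ (both $I\cap J$ and $I\cup J$ are again ideals); this is checked by comparing, for each $p$, the exponent of $x_{p,q_{1}}$ and of $x_{p,q_{2}}$ on the two sides.

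The heart of the argument is to show that a pair $u_{\phi}u_{\psi}$ is \emph{nonstandard} with respect to $<_{\rev}$ if and only if $I_{\phi}$ and $I_{\psi}$ are incomparable. If they are incomparable, then $\{I\cap J,\,I\cup J\}\neq\{I,J\}$, so the identity exhibits a second factorization, and I would verify it is strictly $<_{\rev}$-smaller. The key observation is that among the ideals involved $I\cup J$ is the largest, and under the chosen order on the variables a larger ideal yields a $<$-smaller variable; hence $y_{I\cup J}$ is the $<$-smallest of the four variables $y_{I\cap J},y_{I\cup J},y_{I},y_{J}$. Since $y_{I\cup J}$ occurs in $y_{I\cap J}y_{I\cup J}$ but not in $y_{I}y_{J}$, the reverse lexicographic comparison reduces to this single smallest distinguishing variable and gives $y_{I\cap J}y_{I\cup J}<_{\rev}y_{I}y_{J}$, so the pair is nonstandard. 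Conversely, suppose $I\subseteq J$. Every factorization $u_{I}u_{J}=u_{I'}u_{J'}$ has $I'\cap J'=I$ and $I'\cup J'=J$, because the meet and join are forced by the exponents of $w$ independently of the factorization; and any factorization other than $\{I,J\}$ itself consists of two incomparable ideals with meet $I$ and join $J$, hence is $<_{\rev}$-larger by the case just treated. Thus $\{I,J\}$ is the $<_{\rev}$-minimal factorization and the pair is standard.

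It follows that $u_{\phi_{1}}\cdots u_{\phi_{s}}$ is a standard expression if and only if the ideals $I_{\phi_{1}},\dots,I_{\phi_{s}}$ are pairwise comparable, i.e.\ form a multichain $I_{(1)}\subseteq\cdots\subseteq I_{(s)}$. Existence of a standard expression is already noted, so only uniqueness remains, and for this I would count exponents: in $w=u_{\phi_{1}}\cdots u_{\phi_{s}}$ the exponent of $x_{p,q_{1}}$ equals $\#\{k:p\in I_{(k)}\}$, and since the $I_{(k)}$ form a chain this integer determines, for each $p$, the least index $k$ with $p\in I_{(k)}$. Concretely $I_{(k)}=\{p\in P:\text{the exponent of }x_{p,q_{1}}\text{ in }w\text{ is }\geq s-k+1\}$, while $s=\deg w/|P|$ is recovered from the total degree. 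Hence the multichain, and therefore the standard expression, is uniquely determined by $w$.

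The only genuinely delicate step is the $<_{\rev}$ comparison in the second paragraph; everything else is bookkeeping with ideals and exponents. The care needed there is to translate the defining recipe of the order $<$ on the variables $y_{\phi}$ into the statement that larger ideals correspond to $<$-smaller variables, and then to invoke the reverse lexicographic rule at the single smallest variable that distinguishes the two quadratic monomials. I expect this to be where a reader must be most attentive, so I would isolate it as a short sub-claim before assembling the characterization of standard pairs.
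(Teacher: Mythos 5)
Your proof is correct and follows essentially the same route as the paper's: both use the meet/join swap on an incomparable pair of ideals (your Hibi identity $u_Iu_J=u_{I\cap J}u_{I\cup J}$ is exactly the paper's pointwise $\min/\max$ construction) together with the observation that the union, being the largest ideal, gives the $<$-smallest variable, so the swapped factorization is $<_{\rev}$-smaller; hence standard expressions are multichains of poset ideals, which are then recovered uniquely from the exponents of the $x_{p,q_1}$. Your additional converse (comparable pairs are standard, via the forced meet and join) and the explicit revlex verification go slightly beyond what the paper writes out, but the core argument is identical.
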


\begin{proof}
Let $w = u_{\phi_{1}} \cdots u_{\phi_{s}}$
with $y_{\phi_{1}} \leq \cdots \leq y_{\phi_{s}}$.
Suppose that there exist $2 \leq i_{0} \leq n$ and $1 \leq k < k' \leq s$ for which
$\phi_{k}(p_{i_{0}}) = 2$ and $\phi_{k'}(p_{i_{0}}) = 1$.
Since $y_{\phi_{1}} \leq \cdots \leq y_{\phi_{s}}$,
it follows that there is $1 \leq i' < i_0$ with
$\phi_{k}(p_{i'}) = 1$ and $\phi_{k'}(p_{i'}) = 2$.
Since each of the inverse images $\phi_{k}^{-1}(1)$ and $\phi_{k'}^{-1}(1)$
is a poset ideal of $P$, it follows that each of the maps
$\psi_{k} : P \to Q$ and $\psi_{k'} : P \to Q$ defined by setting
$\psi_{k}(p_{i}) = \min\{\phi_{k}(p_{i}), \phi_{k'}(p_{i})\}$ and
$\psi_{k'}(p_{i}) = \max\{\phi_{k}(p_{i}), \phi_{k'}(p_{i})\}$
for $1 \leq i \leq n$ is isotone.  Furthermore, one has
$y_{\psi_{k}}y_{\psi_{k'}} <_{\rev} y_{\phi_{k}}y_{\phi_{k'}}$ and
$u_{\phi_{k}}u_{\phi_{k'}} = u_{\psi_{k}}u_{\psi_{k'}}$.
Thus $w = u_{\phi_{1}} \cdots u_{\phi_{s}}$ cannot be standard.
Since $y_{\phi_{1}} \leq \cdots \leq y_{\phi_{s}}$,
it follows that
\[
\phi_{1}(p_{1}) \leq \phi_{2}(p_{1}) \leq \cdots \leq \phi_{s}(p_{1}).
\]
Hence, if $w = u_{\phi_{1}} \cdots u_{\phi_{s}}$
is standard with $y_{\phi_{1}} \leq \cdots \leq y_{\phi_{s}}$, then
\[
\phi_{1}(p_{i}) \leq \phi_{2}(p_{i}) \leq \cdots \leq \phi_{s}(p_{i})
\]
for $1 \leq i \leq n$.  This guarantees that a standard expression
of each monomial belonging to $K[P,Q]$ is unique, as desired.
\end{proof}

A finite poset $Q$ is called a {\em rooted tree} if whenever $\alpha, \beta$
and $\gamma$ belong to $Q$ with $\beta < \alpha$ and $\gamma < \alpha$,
then either $\beta \leq \gamma$ or $\gamma \leq \beta$.  In other words,
a finite poset $Q$ is a rooted tree if a maximal chain of $Q$
descending from each $\alpha \in Q$ is unique.
A finite poset is a {\em co-rooted tree} if its  dual poset is a rooted tree.

\begin{Lemma}
\label{basicbasic}
Let $P = [2]$ and
$Q$ 
a co-rooted tree.
Then each monomial belonging to $K[P,Q]$ possesses a unique standard expression.
\end{Lemma}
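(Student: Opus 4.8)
The plan is to use the fact that $P=[2]$ is a two-element chain, so that an isotone map $\varphi\:[2]\to Q$ is nothing but a pair $(a,b)$ of elements of $Q$ with $a\le b$, and $u_\varphi=x_{p_1,a}x_{p_2,b}$. A monomial $w\in K[P,Q]$ is then determined by the multiset $A$ of its first coordinates (its exponents in the $x_{p_1,\ast}$ variables) and the multiset $B$ of its second coordinates, and an expression $w=u_{\phi_1}\cdots u_{\phi_s}$ is precisely a bijective matching of $A$ with $B$ in which every matched pair $(a,b)$ satisfies $a\le b$; call such a matching \emph{valid}. Since the excerpt already records that every monomial admits a standard expression, it remains only to prove \emph{uniqueness}, i.e. that among all valid matchings of $A$ with $B$ exactly one is standard. (By Lemma~\ref{segre}(b) monomials factor over the connected components of $Q$, so one could reduce to $Q$ connected, although the argument below does not need this.)

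First I would pin down the quadratic moves. Given two factors $(a,b)$ and $(c,d)$ with, say, $a\le_{\mathrm{lab}} c$ in the chosen labeling, the only alternative repartition of the four $x$-factors is $(a,d),(c,b)$, which is valid if and only if $a\le d$ and $c\le b$. A direct reverse-lexicographic computation with the four variables $y_{(a,b)},y_{(c,d)},y_{(a,d)},y_{(c,b)}$ (whose $<$-order is lexicographic in the label pairs) shows that the $<_{\rev}$-smaller of the two monomials $y_{(a,b)}y_{(c,d)}$ and $y_{(a,d)}y_{(c,b)}$ is always the \emph{co-sorted} one, i.e. the pairing matching the smaller-label first coordinate with the smaller-label second coordinate. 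Hence a pair of factors is nonstandard precisely when it is anti-sorted \emph{and} its co-sorted repartition is valid; consequently a valid matching is standard if and only if no two of its pairs can be co-sorted by an admissible swap.

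For uniqueness I would run an exchange argument exploiting the co-rooted hypothesis in the form: \emph{the up-set $\{x\in Q: x\ge\alpha\}$ of every $\alpha\in Q$ is a chain}. Suppose $\sigma\ne\tau$ were two standard valid matchings of the same $A$, $B$. Their symmetric difference contains an alternating cycle; after relabelling, $\sigma$ matches $\alpha_i\leftrightarrow\beta_i$ and $\tau$ matches $\alpha_i\leftrightarrow\beta_{i-1}$ for $i\in\ZZ/k$, all edges valid, so $\alpha_i\le\beta_i$ and $\alpha_i\le\beta_{i-1}$ for every $i$. Thus $\beta_{i-1}$ and $\beta_i$ both lie in the up-set of $\alpha_i$ and are therefore \emph{comparable} in $Q$. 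Choosing a position $m$ with $\beta_m$ of minimal label then forces $\beta_m\le\beta_{m-1}$ and $\beta_m\le\beta_{m+1}$. Comparing the labels of $\alpha_m$ and $\alpha_{m+1}$ exhibits an admissible co-sorting swap either inside $\sigma$ (on the pairs $(\alpha_m,\beta_m),(\alpha_{m+1},\beta_{m+1})$) or inside $\tau$ (on $(\alpha_m,\beta_{m-1}),(\alpha_{m+1},\beta_m)$), its validity coming from the chains $\alpha_{m+1}\le\beta_m\le\beta_{m\pm1}$ and $\alpha_m\le\beta_m\le\beta_{m\pm1}$. This contradicts the standardness of $\sigma$ or of $\tau$.

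The hard part is exactly this exchange step for cycles of length greater than two: one must check that the co-rooted property locates a genuinely \emph{improving} swap, not merely a valid one. The degenerate cases, where the minimal-label $\beta_m$ coincides as a poset element with a neighbour, or where $\alpha_m=\alpha_{m+1}$ as elements of $Q$, produce no strict swap; I would dispose of them by observing that in each such case $\sigma$ and $\tau$ can be brought into closer agreement without changing the monomial, so an induction on the length of a shortest alternating cycle closes the argument. The base case $k=2$ is the clean two-cycle check: the two second coordinates are automatically comparable, whence one of the two matchings is anti-sorted and therefore nonstandard.
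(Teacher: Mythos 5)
Your proof is correct, and it rests on the same two pillars as the paper's: co-rootedness (up-sets are chains) forces any two second coordinates lying above a common lower bound to be comparable, and a co-sorting swap of two factors is always $<_{\rev}$-decreasing, hence contradicts standardness. But the way you \emph{locate} the swap is genuinely different. The paper sorts both expressions as $y_{\phi_1}\le\cdots\le y_{\phi_s}$ and $y_{\psi_1}\le\cdots\le y_{\psi_s}$, observes that the variable order then forces $\phi_k(1)=\psi_k(1)$ for every $k$, cancels common factors so that $\phi_i\neq\psi_j$ for all $i,j$, and then only ever looks at the two \emph{smallest} factors: $\phi_1(2)$ and $\psi_1(2)$ are comparable, say $\phi_1(2)<\psi_1(2)$, and equality of the monomials produces some $k_0$ with $\psi_{k_0}(2)=\phi_1(2)$; swapping the second coordinates of $\psi_1$ and $\psi_{k_0}$ is the nonstandard witness. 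Because of the cancellation and the choice of the smallest factor, all inequalities are automatically strict, so no degenerate cases and no induction appear. You instead discard the sorting entirely, view the two expressions as matchings of the multiset $A$ of first coordinates against the multiset $B$ of second coordinates, pass to an alternating cycle in their symmetric difference, pick the minimal-label $\beta_m$, and case on which of $\alpha_m,\alpha_{m+1}$ has smaller label, finding the bad pair inside $\sigma$ in one case and inside $\tau$ in the other. This is sound, and it is more robust in that it never uses the position-wise alignment $\phi_k(1)=\psi_k(1)$ (a feature of the particular variable ordering); the alternating-cycle technique would survive in settings where the expressions cannot be aligned by sorting. The price is exactly the degeneracy bookkeeping ($\beta_m=\beta_{m\pm1}$ or $\alpha_m=\alpha_{m+1}$ as elements of $Q$) that the paper's cancellation trick renders invisible: your rerouting-plus-induction-on-cycle-length handling of it is only sketched, but the sketch is correct and completable, since equal elements of $Q$ can be re-matched without changing the multiset of pairs, strictly shortening the cycle. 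In short: same key lemma, different search for the exchange --- the paper's sorted-first-factor argument is shorter, while yours is a self-contained matching argument at the cost of the degenerate cases.
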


\begin{proof}
%
Let $u$ be a monomial belonging to $K[P,Q]$.  Let
$u = u_{\phi_{1}} \cdots u_{\phi_{s}}$ and
$u = u_{\psi_{1}} \cdots u_{\psi_{s}}$ be standard expressions of
$u$ with $y_{\phi_{1}} \leq \cdots \leq y_{\phi_{s}}$
and $y_{\psi_{1}} \leq \cdots \leq y_{\psi_{s}}$.
Then one has $\phi_{k}(1) = \psi_{k}(1)$
for $1 \leq k \leq s$.  Furthermore,
$\phi_{k}(1) \leq \phi_{k}(2)$ and $\psi_{k}(1) \leq \psi_{k}(2)$
for $1 \leq k \leq s$.
In order to show that the standard expressions
$u_{\phi_{1}} \cdots u_{\phi_{s}}$ and
$u_{\psi_{1}} \cdots u_{\psi_{s}}$ coincide,
we may assume  without loss of generality that $\phi_{i} \neq \psi_{j}$ for all $i$ and $j$.
Since $Q$ is a co-rooted tree and since
$\phi_{1}(1) \leq \phi_{1}(2)$ and $\psi_{1}(1) \leq \psi_{1}(2)$
with $\phi_{1}(1) = \psi_{1}(1)$,
it follows that $\phi_{1}(2)$ and $\psi_{1}(2)$ are comparable.
Let, say, $\phi_{1}(2) < \psi_{1}(2)$.  Since
$u_{\phi_{1}} \cdots u_{\phi_{s}} = u_{\psi_{1}} \cdots u_{\psi_{s}}$,
there is $2 \leq k_{0} \leq s$
with $\psi_{k_{0}}(2) = \phi_{1}(2)$.
Hence $\psi_{k_{0}}(1) \leq \psi_{k_{0}}(2) < \psi_{1}(2)$
and $\psi_{1}(1) \leq \psi_{k_{0}}(2) < \psi_{1}(2)$.
One can then define $\psi'_{1}$ and $\psi'_{k_{0}}$ belonging to $\Hom(P,Q)$
by setting
\[
\psi'_{1}(1) = \psi_{1}(1), \, \, \, \psi'_{1}(2) = \psi_{k_{0}}(2), \, \, \,
\psi'_{k_{0}}(1) = \psi_{k_{0}}(1), \, \, \, \psi'_{k_{0}}(2) = \psi_{1}(2).
\]
Then $u_{\psi_{1}}u_{\psi_{k_{0}}} = u_{\psi'_{1}}u_{\psi'_{k_{0}}}$ with
$y_{\psi'_{1}} <y_{\psi_{1}} \leq y_{\psi_{k_{0}}}$.
Furthermore, $y_{\psi'_{k_0}} \geq y_{\psi'_{1}}$.  Hence
$y_{\psi'_{k_0}}y_{\psi'_{1}}<_{\rev}y_{\psi_{k_0}}y_{\psi_{1}}$.
It then follows that $y_{\psi_{1}}y_{\psi_{k_{0}}}$ cannot be standard.
Hence
a standard expression of each monomial belonging to $K[P,Q]$ is unique,
as desired.
\end{proof}

\begin{Theorem}
\label{Boston}
Let $P$ be a chain  and
$Q$ 
a co-rooted tree.
Then each monomial belonging to $K[P,Q]$ possesses a unique standard expression.
\end{Theorem}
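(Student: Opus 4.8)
The plan is to argue by induction on $n=\abs{P}$, writing $P=\{p_1<\cdots<p_n\}$ and denoting an expression by $\Phi=(\phi_1,\dots,\phi_s)$ with $u_{\phi_1}\cdots u_{\phi_s}=w$; the case $n=1$ is trivial and the case $n=2$ is exactly Lemma~\ref{basicbasic}. Throughout I use that $w$ records, for every position $i$, only the multiset $\{\phi_k(p_i)\}_k$ of the values of the factors at $p_i$, so the content of the statement is that the \emph{joint} threading of these multisets into chains $\phi_1\le\cdots\le\phi_s$ (in the variable order $<$) is pinned down by standardness. Since $P$ is a chain, each $\phi_k$ is a multichain $\phi_k(p_1)\le\cdots\le\phi_k(p_n)$ in $Q$, and the co-rooted hypothesis enters through one structural fact: for any $a\in Q$ the principal filter $Q_{\ge a}=\{q:q\ge a\}$ is a chain. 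In particular, whenever two factors agree at a position $p_{i-1}$, all their subsequent values lie in a common chain and are therefore comparable.

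First I would prove a \emph{restriction lemma}: if $\Phi$ is standard with respect to $<_{\rev}$, then so is its restriction to any initial segment $\{p_1,\dots,p_l\}$ or final segment $\{p_l,\dots,p_n\}$ of $P$. The mechanism is that a nonstandard pair of multichains can always be witnessed by an \emph{uncrossing tail-swap}: if $y_\alpha\le y_\beta$ and the two chains ``cross'' at some column (they are comparable there but in the reverse order to their first difference), exchanging the entire tails of $\alpha$ and $\beta$ beyond the first such column produces two multichains with the same $u$-product and strictly smaller $<_{\rev}$-value. Because a tail-swap exchanges whole tails, it lifts verbatim from the restricted chains to $\Phi$: the junction inequality needed for validity is exactly the one that held in the restriction, and the positions outside the segment are swapped as intact increasing runs, so they stay order preserving. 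Hence a nonstandard restriction forces $\Phi$ itself to be nonstandard.

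Granting this, the inductive step runs as follows. The initial segment $\{p_1,\dots,p_{n-1}\}$ and the final segment $\{p_2,\dots,p_n\}$ are chains of length $n-1$, so by the restriction lemma together with the induction hypothesis the arrays $\Phi|_{\{p_1,\dots,p_{n-1}\}}$ and $\Phi|_{\{p_2,\dots,p_n\}}$ depend only on $w$, not on the chosen standard expression. Writing $m$ for a value of the ``middle'' coordinates $(p_2,\dots,p_{n-1})$, this determines, independently of the expression, both the multiset $X_m$ of first-coordinate values and the multiset $Y_m$ of last-coordinate values occurring together with $m$. It remains to see that standardness forces a unique pairing of $X_m$ with $Y_m$ into the factors whose middle equals $m$. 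Here the two factors being paired share the \emph{same} value $m(p_{n-1})$ at $p_{n-1}$, so all their $p_n$-values lie in the chain $Q_{\ge m(p_{n-1})}$ and are comparable; moreover exchanging two such $p_n$-values is automatically order preserving, since the common value $m(p_{n-1})$ sits below each. Thus the reasoning of Lemma~\ref{basicbasic} applies to the columns $p_1$ and $p_n$ with $m$ frozen: a ``crossed'' pairing (a factor with $<$-smaller first coordinate carrying the larger last coordinate) can be uncrossed to lower $<_{\rev}$, so the standard pairing is the unique sorted one. This determines every factor with middle $m$, and letting $m$ range over all middles recovers the whole expression, proving uniqueness.

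The step I expect to be the main obstacle is the restriction lemma, and inside it the assertion that \emph{every} nonstandard pair of multichains admits a reducing tail-swap, rather than merely some reducing rearrangement that need not lift. The delicate case is when the two chains first differ at $p_1$ on incomparable values and only become comparable later: one must check that if a reducing rearrangement exists at all, then the relevant tail-swap is order preserving, i.e.\ that the junction inequality cannot fail precisely when a genuine reduction is available. Verifying this — equivalently, that a pair of multichains is $<_{\rev}$-minimal as soon as it has no comparable inversion that can be uncrossed — is the technical heart of the argument; once it is in place, the induction and the Lemma~\ref{basicbasic}-style pairing close the proof.
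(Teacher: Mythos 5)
Your overall architecture---induction on $n$, a restriction lemma, and a Lemma~\ref{basicbasic}-style uncrossing at the end---is close to the paper's proof, but the paper restricts \emph{only} by deleting the last element $p_n$ (the map $\phi\mapsto\phi^*$), whereas your argument also needs restriction to the final segment $\{p_2,\ldots,p_n\}$ in order to obtain the multisets $Y_m$. That half of your restriction lemma is false. The obstruction is the asymmetry of the variable order: $<$ reads coordinates starting from $p_1$, so deleting $p_1$ can change which member of a pair is the smaller variable, and a tail-swap that lowers $<_{\rev}$ for the restricted pair can raise it for the full pair. Concretely, let $Q$ be the co-rooted tree with $q_1,q_2<q_3<q_4$ and $q_1,q_2$ incomparable, let $P=[3]$, and take $\phi=(q_1,q_3,q_3)$, $\psi=(q_2,q_2,q_4)$. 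The only way to redistribute the values of $u_\phi u_\psi$ into two isotone maps is $u_{(1,3,4)}u_{(2,2,3)}$ (swapping at $p_1$ or at $p_2$ alone forces the non-relation $q_1\leq q_2$), and since $y_{(1,3,3)}<y_{(1,3,4)}$ this rearrangement is $<_{\rev}$-\emph{larger}; hence $u_\phi u_\psi$ is standard. But its restriction to $\{p_2,p_3\}$ is $u_{(3,3)}u_{(2,4)}=u_{(2,3)}u_{(3,4)}$, and $y_{(2,3)}<y_{(2,4)}$, so the restricted expression is nonstandard. Thus the induction hypothesis cannot be applied to $\Phi|_{\{p_2,\ldots,p_n\}}$, the multisets $Y_m$ are not determined, and your pairing step has nothing to pair; note that the within-class sorted pairing alone cannot replace this, since the real content is precisely that the distribution of the $p_n$-values among the classes is forced.

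The step you flagged as the technical heart---that every nonstandard pair of multichains admits a reducing tail-swap---is actually unproblematic and is exactly the paper's ``observation''; for \emph{initial} segments the witnessing index lifts verbatim, which is why the paper's restriction $\phi\mapsto\phi^*$ preserves standardness. What fails is the lifting for final segments, as above. The paper then closes the argument without any second restriction: after induction one knows $\phi_k(p_j)=\psi_k(p_j)$ for all $k$ and all $j\leq n-1$; assuming the two standard expressions differ, cancel common factors, observe by co-rootedness that $\phi_1(p_n)$ and $\psi_1(p_n)$ are comparable (both lie in the chain above $\phi_1(p_{n-1})=\psi_1(p_{n-1})$), say $\phi_1(p_n)<\psi_1(p_n)$, choose $k_0$ with $\psi_{k_0}(p_n)=\phi_1(p_n)$ (possible because both expressions carry the same $p_n$-content), and swap the $p_n$-values of $\psi_1$ and $\psi_{k_0}$. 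This cross-class swap is isotone and $<_{\rev}$-reducing, contradicting standardness. Replacing your final-segment step by this argument repairs the proof.
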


\begin{proof}
%
We may assume that  $P = [n]$. Then  for each $\phi \in \Hom(P,Q)$,
one has $q_{j^{(\phi)}_{1}} \leq \cdots \leq q_{j^{(\phi)}_{n}}$.
In other words, the image $\phi([n])$ is a multichain (chain with repetitions) of $Q$ of length $n$.
It then follows that $u_{\phi}u_{\psi}$ with $y_{\phi} < y_{\psi}$
is nonstandard if and only if there is $2 \leq i \leq n$ with
$\phi(i) > \psi(i)$ such that
$\phi(i-1) \leq \psi(i)$ and $\psi(i-1) \leq \phi(i)$.
In fact, one has $y_{\psi'}y_{\phi'} <_{\rev} y_{\psi}y_{\phi}$, where
$\phi' = (j_{\phi}^{(1)}, \ldots, j_{\phi}^{(i-1)}, j_{\psi}^{(i)}, \ldots, j_{\psi}^{(n)})$
and
$\psi' = (j_{\psi}^{(1)}, \ldots, j_{\psi}^{(i-1)}, j_{\phi}^{(i)}, \ldots, j_{\phi}^{(n)})$.

Given $\phi \in \Hom(P,Q)$, we introduce $\phi^{*} \in \Hom(P \setminus \{ n \},Q)$
by setting $\phi^{*}(i) = \phi(i)$ for $i \in [n-1]$.
Let $u$ be a monomial belonging to $K[P,Q]$.  Let
$u = u_{\phi_{1}} \cdots u_{\phi_{s}}$ and
$u = u_{\psi_{1}} \cdots u_{\psi_{s}}$ be standard expressions of
$u$ with $y_{\phi_{1}} \leq \cdots \leq y_{\phi_{s}}$
and $y_{\psi_{1}} \leq \cdots \leq y_{\psi_{s}}$.
The above observation guarantees that each of
$u_{\phi^{*}_{1}} \cdots u_{\phi^{*}_{s}}$ and
$u_{\psi^{*}_{1}} \cdots u_{\psi^{*}_{s}}$ is a standard expression.
Thus, working on induction on $n$, it follows that
$\phi^{*}_{k} = \psi^{*}_{k}$ for $1 \leq k \leq s$.
In particular $\phi_{k}(n-1) = \psi_{k}(n-1)$
for $1 \leq k \leq s$.

Now, in order to show that the standard expressions
$u_{\phi_{1}} \cdots u_{\phi_{s}}$ and
$u_{\psi_{1}} \cdots u_{\psi_{s}}$ coincide,
one must show that $\phi_{k}(n) = \psi_{k}(n)$ for $1 \leq k \leq s$.
This can be done by using the same technique as in the proof of
Lemma \ref{basicbasic}.
\end{proof}

\begin{Corollary}
\label{Sydney}
Let $P$ be a chain and suppose that each connected component of $Q$ is
 either a rooted or a co-rooted poset.
Then the toric ideal $J_{P,Q}$ possesses a quadratic Gr\"obner basis.
\end{Corollary}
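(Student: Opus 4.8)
The plan is to reduce the corollary to Theorem~\ref{Boston} in a few steps, so I would first recall the standard dictionary between unique standard expressions and quadratic Gr\"obner bases for toric ideals (cf.\ \cite{St}, \cite{dojoEN}). The nonstandard quadratic monomials $y_{\phi}y_{\psi}$ are precisely the leading terms, with respect to $<_{\rev}$, of the quadratic binomials $y_{\phi}y_{\psi}-y_{\phi'}y_{\psi'}\in J_{P,Q}$, so the ideal $I_2$ they generate satisfies $I_2\subseteq \ini_{<_{\rev}}(J_{P,Q})$. A monomial of $S$ avoids $I_2$ exactly when it is standard, and $\pi$ carries the standard monomials \emph{onto} the monomials of $K[P,Q]$; if moreover every monomial of $K[P,Q]$ has a \emph{unique} standard expression, this correspondence is bijective. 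Since the standard monomials for the full initial ideal $\ini_{<_{\rev}}(J_{P,Q})$ always form a $K$-basis of $K[P,Q]$ and, because $I_2\subseteq \ini_{<_{\rev}}(J_{P,Q})$, are contained in the standard monomials for $I_2$, the two families must coincide. Hence $I_2=\ini_{<_{\rev}}(J_{P,Q})$ and the quadratic binomials form a Gr\"obner basis. It therefore suffices to verify the unique standard expression property in each situation of the corollary.

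Second, I would reduce to connected $Q$. Because $P$ is a chain it is connected, so every isotone map sends $P$ into a single connected component of $Q$; together with Lemma~\ref{sum}(b) this gives $\Hom(P,Q)=\coprod_i \Hom(P,Q_i)$, and Lemma~\ref{segre}(b) yields $K[P,Q]\iso K[P,Q_1]\tensor\cdots\tensor K[P,Q_s]$ on pairwise disjoint sets of variables. Every monomial of $K[P,Q]$ then factors uniquely according to the components $Q_i$, so it has a unique standard expression as soon as each factor does; equivalently, the union of quadratic Gr\"obner bases of the $J_{P,Q_i}$ is a quadratic Gr\"obner basis of $J_{P,Q}$. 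This leaves a single connected component $Q$ that is a rooted or a co-rooted tree.

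Third, I would dispose of the two remaining cases. If $Q$ is a co-rooted tree, Theorem~\ref{Boston} gives directly that every monomial of $K[P,Q]$ has a unique standard expression, and the first step furnishes the quadratic Gr\"obner basis. If $Q$ is a rooted tree, I would pass to the dual via Lemma~\ref{obvious}: $K[P,Q]\iso K[P^\vee,Q^\vee]$, where $P^\vee$ is again a chain and $Q^\vee$ is a co-rooted tree. Applying the co-rooted case to $(P^\vee,Q^\vee)$ produces a quadratic Gr\"obner basis of $J_{P^\vee,Q^\vee}$, which transports along the generator-preserving isomorphism to a quadratic Gr\"obner basis of $J_{P,Q}$; as the corollary only asserts the \emph{existence} of such a basis, this is enough.

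The conceptual content is entirely in Theorem~\ref{Boston} and the toric dictionary, so the part I expect to require the most care is purely bookkeeping about term orders: one must check that $<_{\rev}$ restricts compatibly to each block of variables in the second step, and that the duality of the third step sends the reverse-lexicographic order on $(P,Q)$ to an admissible order on $(P^\vee,Q^\vee)$, which amounts to reversing the canonical labelings of $P$ and $Q$. Once these compatibilities are in place, the corollary follows formally.
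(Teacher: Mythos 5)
Your proposal is correct and takes essentially the same approach as the paper: the co-rooted case follows from Theorem~\ref{Boston} via the standard-monomial/Gr\"obner-basis dictionary (which the paper invokes by citing \cite{AHH} and \cite{OhHirootsystem} rather than reproving, as you do in your first step), the rooted case is handled by dualizing with Lemma~\ref{obvious}, and disconnected $Q$ is handled through the tensor decomposition of Lemma~\ref{segre}(b). The only difference is cosmetic: you reduce to connected $Q$ first, whereas the paper treats the connected cases first and assembles the components at the end.
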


\begin{proof}
Let $Q$ be a co-rooted tree.
Let $\MG$ denote the set of quadratic binomials of $S$ of the form
$y_{\phi}y_{\psi} - y_{\phi'}y_{\psi'}$ with
$y_{\phi'}y_{\psi'} <_{\rev} y_{\phi}y_{\psi}$.
Furthermore, write ${\rm in}_{<_{\rev}}(\MG)$ for the monomial ideal of
$S$ which is generated by those quadratic monomials
$y_{\phi}y_{\psi}$ which are nonstandard.
Let $w$ and $w'$ be monomials of $S$
with $w \neq w'$ such that neither $w$ nor $w'$ belongs to
${\rm in}_{<_{\rev}}(\MG)$.
It then follows from Theorem \ref{Boston} that
$\pi(w) \neq \pi(w')$.
In other words, the set of those monomials $\pi(w)$ of $K[P,Q]$
with $w \not\in {\rm in}_{<_{\rev}}(\MG)$ is linearly independent.
By virtue of the well-known technique
(\cite[Lemma 1.1]{AHH} and \cite[(0.1)]{OhHirootsystem}) on initial ideals,
this fact guarantees that $\MG$ is a quadratic Gr\"obner basis of $J_{P,Q}$
with respect to $<_{\rev}$, as desired.

Let $Q$ be a rooted tree.
Lemma~\ref{obvious} says that $K[P^{\vee}, Q^{\vee}]$ is isomorphic to $K[P,Q]$.
Since $P^{\vee} = P$ and since $Q^{\vee}$ is a co-rooted tree, it follows that
$J_{P,Q}$ possesses a quadratic Gr\"obner basis, as required.

Now assume that each component $Q_i$ $(i=1,\ldots,m)$ of $Q$ is either rooted or co-rooted. As seen before, the toric ideal of each $K[P,Q_i]$ has a quadratic Gr\"obner basis. Since  by Lemma~\ref{segre}(b),  $K[P,Q]$ is the tensor product of the $K$-algebras $K[P,Q_i]$ it follows that the Gr\"obner basis of $J_{P, Q}$ is the union of the Gr\"obner bases of the $J_{P, Q_i}$. Thus the desired result follows.
\end{proof}

As we have seen at the beginning  of this section, $J_{P,Q}$ is not always generated by quadratic binomials. We say that  $C$ is a {\em poset cycle} of length $\ell$, if the vertices of $C$ are $a_1,\ldots,a_\ell, b_1,\ldots,b_\ell$ whose covering relations are $a_i<b_i$, $a_i<b_{i+1}$ for $1\leq i\leq \ell$ where $b_{\ell+1}=b_1$. The poset $Q$ in Figure~\ref{salam} is a poset $6$-cycle. Note, that not any cycle of $G(P)$ is a poset cycle, see for example Figure~\ref{chetori}.
\begin{figure}[!htbp]
	\begin{center}
\psset{xunit=.8cm,yunit=.8cm,algebraic=true,dimen=middle,dotstyle=o,dotsize=5pt 0,linewidth=0.8pt,arrowsize=3pt 2,arrowinset=0.25}
\begin{pspicture*}(3.,-2.6)(11.,3.)
\psline(7.06,2.34)(5.7,0.94)
\psline(5.72,-1.04)(5.7,0.94)
\psline(7.26,-2.3)(5.72,-1.04)
\psline(8.68,-0.08)(7.06,2.34)
\psline(8.68,-0.08)(7.26,-2.3)
\begin{scriptsize}
\psdots[dotstyle=*,linecolor=black](7.06,2.34)
\psdots[dotstyle=*,linecolor=black](5.7,0.94)
\psdots[dotstyle=*,linecolor=black](5.72,-1.04)
\psdots[dotstyle=*,linecolor=black](7.26,-2.3)
\psdots[dotstyle=*,linecolor=black](8.68,-0.08)
\end{scriptsize}
\end{pspicture*}
\caption{A cycle but not a poset cycle}\label{chetori}
\end{center}
\end{figure}
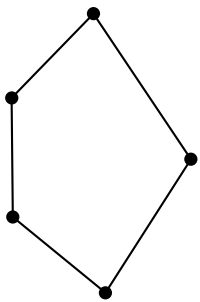

Based on Theorem~\ref{Boston},  Corollary~\ref{Sydney} and experimental evidence we propose

\begin{Conjecture}
\label{Berkeley}
Let $P,Q\in \MP$, and assume that $Q$ does not contain any induced poset cycle of length greater than 4. Then $J_{P,Q}$ is generated by quadratic binomials.
\end{Conjecture}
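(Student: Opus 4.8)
The plan is to establish the stronger statement that, under the hypothesis on $Q$, every monomial of $K[P,Q]$ possesses a \emph{unique} standard expression with respect to $<_{\rev}$; the quadratic Gr\"obner basis, and hence the quadratic generation asserted, then follows verbatim from the mechanism of Corollary~\ref{Sydney} (the initial-ideal technique of \cite{AHH} and \cite{OhHirootsystem}), which derives a quadratic Gr\"obner basis from uniqueness of standard expressions. Since existence of a standard expression is already recorded before Lemma~\ref{basic}, the entire content is \emph{uniqueness}. The reductions of Lemma~\ref{segre} are available in principle --- the tensor factorization (b) would reduce to connected $Q$ and the Segre factorization (a) to connected $P$ --- but I would not rely on the Segre step, whose compatibility with quadratic generation is itself delicate; instead, since standardness is defined by $<_{\rev}$ directly on $\Hom(P,Q)$, I would run the argument for arbitrary $P$ and $Q$ and invoke connectivity of $Q$ only locally, inside the exchange step below.

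The proof would proceed by induction on $|P|$, mirroring the induction of Theorem~\ref{Boston}. Choose the top-labeled element $p_n$, which we may take to be maximal, and for $\phi\in\Hom(P,Q)$ write $\phi^{*}$ for its restriction to $P\setminus\{p_n\}$. Given two standard expressions $u=u_{\phi_1}\cdots u_{\phi_s}=u_{\psi_1}\cdots u_{\psi_s}$ with rows ordered increasingly in $<$, the first task is to show that the restricted expressions $u_{\phi_1^{*}}\cdots u_{\phi_s^{*}}$ and $u_{\psi_1^{*}}\cdots u_{\psi_s^{*}}$ are again standard over $P\setminus\{p_n\}$; the inductive hypothesis then gives $\phi_k^{*}=\psi_k^{*}$ for all $k$, so the two expressions agree on every element of $P$ except possibly $p_n$. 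It remains to match the $p_n$-coordinates, that is, to show that the two ways of assigning the fixed multiset of $Q$-values at $p_n$ to the rows must coincide. A first subtlety is that compatibility of standardness with restriction is not automatic for general $P$, since a swap over $P\setminus\{p_n\}$ need not extend to an isotone swap over $P$; I would address this by choosing $p_n$ so as to minimize its interaction with the rest of $P$, in the spirit of the leaf-removal of Theorem~\ref{normaltheorem}.

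The remaining \emph{Exchange Lemma} is the heart of the matter and is precisely where the hypothesis enters. Once the restrictions are matched, assigning the multiset of $p_n$-values to the rows amounts to choosing a perfect matching in the bipartite graph whose one side is the set of rows and whose other side is the multiset of available values at $p_n$, where row $k$ may receive a value only if it dominates $\phi_k(p')$ for every lower cover $p'$ of $p_n$. Two standard expressions correspond to two such matchings, and a quadratic move is exactly an uncrossing of two matching edges along a $4$-cycle of this comparability graph. I would prove that, when $Q$ contains no induced poset cycle of length greater than $4$, any two such matchings are connected by $4$-cycle uncrossings, so the $<_{\rev}$-minimal one is unique; the argument is by contradiction, extracting from a pair of matchings that cannot be uncrossed an alternating closed walk in $Q$ whose comparabilities, read off along the covering relations at $p_n$, assemble into an induced poset cycle of length greater than $4$ of the type displayed in Figure~\ref{salam}. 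This is the direct generalization of the final paragraph of Lemma~\ref{basicbasic}, where for $P=[2]$ and co-rooted $Q$ the comparabilities above a common value form a chain, so no such long cycle can occur.

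The step I expect to be the genuine obstacle is this Exchange Lemma for general $P$. For $P=[2]$ it is a clean perfect-matching-exchange statement on a bipartite comparability graph, and connectivity under $4$-cycle swaps fails exactly when a $2k$-cycle with $k\geq 3$ occurs as an induced subposet; this is the combinatorial substance behind Lemma~\ref{basicbasic}. For general $P$ the rows are coupled through the several lower covers of $p_n$, so a single uncrossing must remain isotone simultaneously at all of them, and one must verify that a genuinely stuck configuration still forces an \emph{induced} cycle, with no chords introduced by the comparabilities that isotonicity imposes along $P$; ruling out such chords, and thereby guaranteeing that the extracted cycle has length at least $5$ rather than collapsing to an allowed $4$-cycle, is where the real work lies. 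That an induced poset cycle of length greater than $4$ does obstruct quadratic generation --- witnessed by the two distinct standard expressions in Example~\ref{cycle} and the resulting cubic generator for the poset of Figure~\ref{salam} --- is the evidence that this is the only obstruction and that the present approach should close the conjecture.
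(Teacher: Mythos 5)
This statement is Conjecture~\ref{Berkeley}: the paper offers \emph{no} proof of it, only the supporting evidence of Theorem~\ref{Boston}, Corollary~\ref{Sydney} and computations, so there is no proof of record to match your argument against. Judged on its own terms, your proposal is a strategy outline rather than a proof, and you have correctly located, but not closed, the two genuine gaps. First, the restriction step: the induction of Theorem~\ref{Boston} works because $P=[n]$ is a chain, so $p_n$ has a unique lower cover and standardness of an expression passes to the restriction $\phi\mapsto\phi^{*}$. For general $P$ this fails in both directions --- a swap that is isotone over $P\setminus\{p_n\}$ need not extend over $P$, so the restricted expressions need not be standard and the inductive hypothesis does not apply --- and your proposed remedy, ``choosing $p_n$ to minimize interaction in the spirit of Theorem~\ref{normaltheorem},'' has no content when $G(P)$ has no leaf (e.g.\ when $P$ is itself a poset cycle); the leaf-removal technique of Theorem~\ref{normaltheorem} is exactly what confines Corollary~\ref{normal} to forests. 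Second, the Exchange Lemma --- that two matchings of $p_n$-values which cannot be connected by $4$-cycle uncrossings force an \emph{induced} poset cycle of length greater than $4$, with all chords ruled out despite the comparabilities that isotonicity imposes through the several lower covers of $p_n$ --- is stated as the heart of the matter and then explicitly left open (``where the real work lies''). A proof that names its key lemma but does not prove it is not a proof.

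There is also a structural overshoot worth flagging: uniqueness of standard expressions with respect to $<_{\rev}$ is, via the \cite{AHH}/\cite{OhHirootsystem} mechanism, \emph{equivalent} to the quadratic binomials forming a Gr\"obner basis in that specific order, which is strictly stronger than the conjectured quadratic generation. The paper's closing remark --- that for the $6$-cycle $Q$ the ideal $J_{[2],Q}$ fails to have a quadratic Gr\"obner basis in this very order --- already shows how order-sensitive this property is, and nothing you say rules out a pair $(P,Q)$ satisfying the hypothesis for which $J_{P,Q}$ is quadratically generated yet some monomial has two distinct standard expressions in the canonical order (toric ideals generated by quadrics without a quadratic Gr\"obner basis in a given, or any, order are common). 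So even granting your Exchange Lemma for $P=[2]$, the plan could fail for general $P$ while the conjecture remains true; conversely, a counterexample to your stronger claim would not refute the conjecture. The route is reasonable and consistent with how the paper proves its special cases (Lemmas~\ref{basic}, \ref{basicbasic}, Theorem~\ref{Boston}), but as it stands the conjecture remains open, and so does your argument.
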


The poset $Q$ given in Figure~\ref{salam} is  a poset cycle. As expected by our conjecture, $J_{[2],Q}$ is not generated by quadrics. Actually, $J_{[2],Q}$ does not even possess a quadratic Gr\"obner basis with respect to the reverse lexicographic term order induced by the natural order of the variables.

\end{document}